\newtheorem{thm}{Theorem}
 \newtheorem{cor}{Corollary}
 \newtheorem{prop}{Proposition}
 \theoremstyle{definition}
 \newtheorem{defn}{Definition}
 \newtheorem{ex}{Example}
 \newtheorem{rem}{Remark}
\newcommand{\Q}{\mathbb{Q}}
\newcommand{\B}{\mathcal{B}}
\title{Schur-Weyl duality for $U_{v,t}(sl_{n})$}
\author{Yanmin Yang, Haitao Ma, Zhu-Jun Zheng}
\address{Department of Mathematics, Guangzhou University,
Waihuanxi Lu, Guangzhou Higher Education Mega Center, Panyu District, Guangzhou, P.R.China}
\email{yangym929@gmail.com}
\address{Department of Mathematics, South China University of Technology, Wushan Road, Tianhe District, Guangzhou, P.R.China}
\email{
mahaitao871219@163.com}
\address{Department of Mathematics, South China University of Technology, Wushan Road, Tianhe District, Guangzhou, P.R.China}
\email{
Zhengzj@scut.edu.cn
}
\date{\today}
\keywords{Schur-Weyl duality, two-parameter quantum algebra, Hecke algebra}
\begin{document}
\begin{abstract}
In \cite{fl}, the authors get a new presentation of two-parameter quantum algebra $U_{v,t}(\mathfrak{g})$. Their presentation can cover all Kac-Moody cases. In this paper, we construct a suitable Hopf pairing such that $U_{v,t}(sl_{n})$ can be realized as Drinfeld double of certain Hopf subalgebras with respect to the Hopf pairing.
Using Hopf pairing, we  construct a $R$-matrix for $U_{v,t}(sl_{n})$ which will be used to
give the Schur-Weyl dual between $U_{v,t}(sl_{n})$ and  Hecke algebra $H_{k}(v,t)$. Furthermore,
 using the Fusion procedure we construct the primitive orthogonal idempotents of $H_{k}(v,t)$. As a corollary, we give the explicit construction of irreducible $U_{v,t}(sl_{n})$-representations of $V^{\otimes k}$.
\end{abstract}
\maketitle

\section{Introduction}
Classical Schur-Weyl duality  related irreducible finite-dimensional representations of the general linear and symmetric groups \cite{Weyl}. The quantum version for the quantum enveloping algebra $U_q(\mathfrak{sl}_n)$ and the Hecke algebra $H_q({\mathfrak S}_m)$ has
been one of the pioneering examples \cite{Jb2} in the fervent development of
quantum groups.  Two-parameter general linear  quantum groups were introduced by Takeuchi in 1990 \cite{Takeuchi}. The related references are \cite{AST, CM, DP, DT, JingLiu, Takeuchi}.
In 2001, Benkart and Witherspoon obtained the structure of two-parameter quantum groups corresponding to the general linear Lie algebra $gl_n$ and  the special linear Lie algebra $sl_n$ with a different motivation \cite{BW}. They showed that the quantum groups can be realized as
Drinfeld doubles of certain Hopf subalgebras with respect to Hopf pairings.
Using Hopf pairing, Benkarat and Witherspoon constructed $R$-matrix which is used to establish an analogue of Schur-Weyl duality \cite{benkart2001rep}.

In \cite{fl},
using geometric construction,  the authors  got a new presentation of generators and relations for a two-parameter quantum algebra $U_{v,t}$ determined by a certain matrix which may served as a generalized Cartan matrix. The two parameters $v$ and $t$ they used  are different from the one $(r, s)$.
Furthermore, their presentation covered all Kac-Moody cases, unlike the one in literature which mainly studies finite type and some affine types. A two-parameter quantum algebra $U_{v,t}$ is a two-cocycle deformation, depending only on the second parameter $t$, of its one-parameter analogue. And the algebra $U_{r,s}$ in \cite{BW}, is a two-cocycle deformation depending both parameters ($r=s^{-1}$).

 We focus on the two-parameter quantum group $U_{v,t}(sl_{n})$ for the purpose of  giving the Shur-Weyl dual between $U_{v,t}(sl_{n})$ and $H_{k}(v,t)$. We will show that
$U_{v,t}(sl_{n})$ also has a Drinfeld double realization by two certain Hopf subalgebras, but the $R$-matrix constructed by similar way in \cite{benkart2001rep} can not afford a representation of $H_k(v,t)$, we need to take a suitable modification.

This paper is organized as follows. In section \ref{sec2}, we give a Hopf pairing between two certain Hopf subalgebras of $U_{v,t}(sl_{n})$.
Then we prove that $U_{v,t}(sl_{n})$ can be realized as the Drinfeld double of certain Hopf subalgebras with respect to the Hopf pairing. In section \ref{sec3-tensor rep}, we construct the tensor power representation $V^{\otimes k}$ of $U_{v,t}(sl_{n})$ and $R$-matrix $\tilde{R}$. In section \ref{sec hecke alg}, we prove that the $U_{v,t}(sl_{n})$-module $V^{\otimes k}$ affords a representation of Hecke algebra $H_k(v,t)$. This leads to a Schur-Weyl duality between $U_{v,t}(sl_{n})$ and  Hecke algebra $H_{k}(v,t)$. In section \ref{sec idempotent}, we give a family of primitive orthogonal idempotents of $H_{k}(v,t)$. In section \ref{sec fusion}, irreducible representations of $U_{v,t}(sl_{n})$ are constructed by using the fusion procedure.

\section{Two-Parameter Quantum Group $U_{v,t}(sl_{n})$ And Its Drinfeld Double}\label{sec2}

In this section, we review the definition of two-parameter quantum algebra $U_{v,t}(sl_{n})$ introduced by Fan and Li in \cite{fl} and its Hopf algebra structure. In particular, $U_{v,t}(sl_{n})$ also can be realized as a Drinfeld double of its certain subalgebras.

\subsection{Two-Parameter Quantum Group $U_{v,t}(sl_{n})$}
In this paper,  we fix the  Cartan datum $(\Omega_{n-1}, \cdot)$  of type $A_{n-1}$, where
 $$\Omega_{n-1} =(\Omega_{ij})= \left(
                             \begin{array}{cccccc}
                               1 & 0 & 0 & \dots & 0 & 0 \\
                               -1 & 1 & 0 & \dots & 0 & 0 \\
                               0 & -1 & 1 & \dots & 0 & 0 \\
                               \vdots & \vdots & \vdots & \vdots & \vdots & \vdots \\
                               0 & 0 & 0 & \dots & 1 & 0 \\
                               0 & 0 & 0 & \dots & -1 & 1 \\
                             \end{array}
                           \right)_{(n-1) \times (n-1)},
$$
and for any $1\leq i, j\leq n-1$, denote $\langle i,j  \rangle=\Omega_{ij}$, then $i\cdot j$ can be defined as $\langle i, j\rangle +\langle j,i\rangle$.

\begin{defn}[\cite{fl}]
 The two-parameter quantum algebra $U_{v,t}(sl_{n})$ associated to $\Omega_{n - 1}$ is an associative $\mathbb{Q}(v,t)$-algebra with 1 generated by symbols $E_i, F_i,$ $K_i^{\pm 1}, {K_i'}^{\pm 1}$, $\forall 1\leq i < n$   and subject to the following relations.
\begin{eqnarray*}
  (R1)& & K_i^{\pm 1}K^{\pm 1}_j=K^{\pm 1}_jK_i^{\pm 1},\ \ {K_i'}^{\pm 1}{K_j'}^{\pm 1}={K_j'}^{\pm 1}{K_i'}^{\pm 1},\\
     & & K_i^{\pm 1}{K_j'}^{\pm 1}={K_j'}^{\pm 1}K_i^{\pm 1},\ \ K_i^{\pm 1}K_i^{\mp 1}=1={K_i'}^{\pm 1}{K_i'}^{\mp 1},\\
  (R2)& &K_iE_jK^{-1}_i=v^{i\cdot j}t^{\langle i,j\rangle - \langle j,i\rangle}E_j,\ \ {K_i'}iE_j{K_i'}^{-1}=v^{-i\cdot j}t^{\langle i,j\rangle - \langle j,i\rangle}E_j,\\
     & &K_iF_jK^{-1}_i=v^{- i\cdot j}t^{\langle j,i\rangle - \langle i,j\rangle}F_j,\ \ {K_i'}F_j{K_i'}^{-1}=v^{i\cdot j}t^{\langle j,i\rangle - \langle i,j\rangle}F_j,\\
  (R3)& & E_iF_j-F_j E_i=\delta_{ij}\frac{K_{i}-{K_i'}}{v-v^{-1}},\\
  (R4)& & [E_i, E_j]=[F_i, F_j]=0 \ if \mid i-j \mid >1, \\
  (R5)& & E_i^2E_{i+1}-t(v+v^{-1})E_iE_{i+1}E_i+t^2 E_{i+1}E_{i}^2=0,\\
  & & E_iE_{i+1}^2-t(v+v^{-1})E_{i+1}E_{i}E_{i+1}+t^2 E_{i+1}^2E_{i}=0,\\
  (R6)& & F_i^2F_{i+1}-t^{-1}(v+v^{-1})F_iF_{i+1}F_i+t^{-2} F_{i+1}F_{i}^2=0, \\
  & &  F_iF_{i+1}^2-t^{-1}(v+v^{-1})F_{i+1}F_{i}F_{i+1}+t^{-2} F_{i+1}^2F_{i}=0.\\
\end{eqnarray*}
\end{defn}

 There is a Hopf algebra structure on the algebra $U_{v,t}(sl_n)$ with the comultiplication $\Delta$  by
  $$\begin{array}{llll}
   &\Delta(K_i^{\pm 1})=K_i^{\pm 1} \otimes K_i^{\pm 1},&\Delta({K_i'}^{\pm 1})={K_i'}^{\pm 1}\otimes {K_i'}, &\vspace{4pt}\\
   &\Delta(E_i)=E_i\otimes 1+K_i\otimes E_i,& \Delta(F_i)=1 \otimes F_i+F_i\otimes K_i', & \vspace{4pt}\\
\end{array}$$
the counit $\varepsilon$ by
 $$\begin{array}{llll}
  &\varepsilon(K_i^{\pm 1})=\varepsilon({K_i'}^{\pm 1})=1,& \varepsilon(E_i)=\varepsilon(F_i)=0,& S(K_i^{\pm 1})=K_i^{\mp 1},\vspace{4pt}\\
 \end{array}$$
and the antipode $S$ by
$$\begin{array}{llll}
  & S({K_i'}^{\pm 1})={K_i'}^{\mp 1},&
  S(E_i)=-K_i^{-1}E_i,& S(F_i)=-F_i{K_i'}^{-1}.
  \end{array}$$

\subsection{Drinfeld Double Realization Of  $U_{v,t}(sl_{n})$}

\begin{defn} (See \cite{Jos}, 3.2.1)\label{def pairing}
A Hopf paring of two Hopf algebras $H$ and $H'$ is a bilinear form $(-,-): H' \times H \longrightarrow K \ (a\ field)$ such that
\begin{itemize}
  \item [(1).] $(1,h)=\epsilon_H(h)$, \qquad $( h',1)=\epsilon_{H'}(h')$;
  \item [(2).] $( h',hk)=(\Delta_{H'}(h'), h\otimes k)=\sum( h'_{(1)}, h)( h'_{(2)}, k)$;
  \item [(3).] $( h'k',h)=( h'\otimes k',\Delta_{H}(h))=\sum( h', h_{(1)})( k', h_{(2)})$;
\end{itemize}
for all $h,k\in H$, $h',k'\in H'$, where $\epsilon_H$ and $\epsilon_{H'}$ are the counits of $H$ and $H'$ respectively, and $\Delta_{H}$ and $\Delta_{H'}$ are their comultiplications.
For $h\in H$, $\Delta(h)=\sum h_{(1)}\otimes h_{(2)}$.
\end{defn}

A direct consequence is that
\begin{equation}\label{}
  ( S_{H'}(h'), h)=( h', S_H(h))
\end{equation}
%$$( S_{H'}(h'), h)=( h', S_H(h))$$
for all $h\in H$ and $h'\in H'$, where $S_{H'}$ and $S_H$ are the antipodes of $H'$ and $H$ respectively.

Let $\B$ (resp. $\B'$) be the Hopf subalgebra of $U_{v,t}(sl_{n})$ generated by $E_i$, $K_i^{\pm1}$ (resp. $F_i, K_i^{'\pm1}$) for $1\leq i <n$. $\B^{'coop}$ is the Hopf algebra having the opposite comultiplication to the Hopf algebra $\B'$ and $S_{\B^{'coop}}=S_{\B'}^{-1}$, $\Delta_{\B^{'coop}}=\Delta^{op}$.

\begin{prop}
There exists a unique Hopf pairing $(-,-): \B^{'coop} \times \B \longrightarrow \Q(v,t)$ such that
\begin{eqnarray}\label{hopfpairing1}
% \nonumber to remove numbering (before each equation)
  (F_i, E_j) & = & \frac{\delta_{ij}}{v^{-1}-v}, \\ \label{hopfpairing2}
 (K_i', K_j) & = & v^{j\cdot i}t^{\langle j,i \rangle-\langle i, j\rangle},
\end{eqnarray}
for any $1\leq i, j <n$,
and all other pairs of generators are 0. Moreover, we have
\begin{equation}\label{}
  ( S(a), S(b))=( a, b)
\end{equation}
%$$( S(a), S(b))=( a, b)$$
for any $a\in \B^{'coop}$, $b\in \B$.
\end{prop}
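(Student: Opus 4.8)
The plan is to pin the pairing down by uniqueness first, then to construct it on the triangular pieces and verify well-definedness, and finally to deduce the antipode identity from the uniqueness already obtained.

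\textbf{Uniqueness.} Each element of $\B$ is a linear combination of monomials in $E_i, K_i^{\pm1}$, and each element of $\B^{'coop}$ a linear combination of monomials in $F_i, K_i'^{\pm1}$. To evaluate $(a,b)$ I write the second argument as a product and apply axiom (2) of Definition \ref{def pairing}, which replaces it by pairings against the individual factors at the cost of splitting $a$ through $\Delta^{op}$; the second argument strictly shortens, so by induction I reduce to pairings $(x,g)$ with $g$ a single generator. For such a pairing I expand $x$ as a product and apply axiom (3), using the known coproduct of the generator $g$ (group-like or skew-primitive). Iterating axioms (2) and (3), together with axiom (1) for the unit, expresses $(a,b)$ as a polynomial in the prescribed generator values \eqref{hopfpairing1}, \eqref{hopfpairing2} and the vanishing mixed values. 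Hence at most one Hopf pairing restricts to the given data, giving uniqueness.

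\textbf{Existence.} I exploit the triangular structure $\B=\B^{0}\B^{+}$ with $\B^{0}=\Q(v,t)[K_i^{\pm1}]$ and $\B^{+}=\langle E_i\rangle$, and dually $\B^{'coop}=\B^{'0}\B^{'-}$ with $\B^{'0}=\Q(v,t)[K_i'^{\pm1}]$ and $\B^{'-}=\langle F_i\rangle$. On the Cartan parts I extend \eqref{hopfpairing2} to a bicharacter of the free abelian groups generated by the $K_i'$ and by the $K_j$; this is consistent because $(K_i',K_j)\mapsto v^{j\cdot i}t^{\langle j,i\rangle-\langle i,j\rangle}$ is bimultiplicative, and it respects the $K$-relations in (R1). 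On the nilpotent parts the $\mathbb{N}^{n-1}$-grading by degree in the $E_i$ (resp. $F_i$) forces the form to vanish between components of unequal degree, and within each degree the form is determined recursively from \eqref{hopfpairing1} by axioms (2), (3). I then assemble the full form by declaring $(fk',\,ek)$ to be the product of a Cartan factor and a nilpotent factor, organized through the coproducts so that every mixed pairing ($E_i$ against $K_j'$, or $F_i$ against $K_j$) vanishes, and I verify axioms (1)--(3) on all of $\B^{'coop}\times\B$. The part of (R2) internal to $\B$ and to $\B'$ then records only how the grading transforms under the bicharacter and holds automatically; what remains is to check that the nilpotent form descends modulo the relations among the $E_i$ alone, namely (R4) and (R5), and among the $F_i$ alone, namely (R4) and (R6).

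\textbf{Main obstacle.} The essential point is that the cubic Serre elements in (R5) and (R6) lie in the radical of the nilpotent form, the commutators (R4) being an easier analogue. To verify (R5) I would compute the iterated coproduct of $E_i^{2}E_{i+1}-t(v+v^{-1})E_iE_{i+1}E_i+t^{2}E_{i+1}E_i^{2}$ and pair it against every degree-three monomial in the $F_j$ of matching weight, checking that the result vanishes; the coefficient $t(v+v^{-1})$ (and $t^{-1}(v+v^{-1})$ on the $F$-side) is precisely what forces the surviving terms to cancel. This is the only place where the exact Serre coefficients matter and where the genuine computation lies, the extra $t$-powers produced by (R2) and by passing to $\Delta^{op}$ being the two-parameter feature absent from the one-parameter case. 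For the last identity I set $\langle a,b\rangle:=(S(a),S(b))$; since $\B$ and $\B'$ are $S$-stable and $S$ is both an algebra and a coalgebra anti-homomorphism, the two order-reversals compensate and $\langle-,-\rangle$ is again a Hopf pairing $\B^{'coop}\times\B\to\Q(v,t)$. A direct check on generators gives $\langle F_i,E_j\rangle=(F_i,E_j)$ and $\langle K_i',K_j\rangle=(K_i',K_j)$ with the mixed values still zero; for instance $(S(F_i),S(E_j))=(-F_iK_i'^{-1},\,-K_j^{-1}E_j)$ collapses through axioms (2), (3) to $\delta_{ij}/(v^{-1}-v)=(F_i,E_j)$, while $(K_i'^{-1},K_j^{-1})=(K_i',K_j)$ follows from bimultiplicativity. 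Since $\langle-,-\rangle$ and $(-,-)$ agree on generators, the uniqueness proved above forces $\langle-,-\rangle=(-,-)$, which is exactly $(S(a),S(b))=(a,b)$.
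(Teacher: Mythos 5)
Your proposal is correct in substance and follows the same overall strategy as the paper: extend the prescribed generator values to a bilinear form by imposing the pairing axioms (1)--(3), and reduce existence to checking that the defining relations of $\B$ and $\B'$ are preserved. The differences are in organization and in what gets actually proved. You make the Cartan/nilpotent factorization and the $\mathbb{N}^{n-1}$-grading explicit, so that the (R1)/(R2)-type compatibilities come out automatically from bimultiplicativity of the bicharacter; the paper instead verifies one such relation by hand, computing $( X, K_jE_i)=( X, v^{j\cdot i}t^{\langle j,i\rangle-\langle i,j\rangle}E_iK_j)$ for $X=K_k'F_l$, and asserts the rest. Conversely, you give a genuine proof of the ``Moreover'' identity $(S(a),S(b))=(a,b)$ --- twisting the pairing by the antipodes, checking it is again a Hopf pairing because $S$ reverses both product and coproduct, and invoking your uniqueness --- whereas the paper states it without argument. (It also follows in one line from the paper's displayed consequence $(S_{H'}(h'),h)=(h',S_H(h))$, provided $S$ on the first slot is read as $S_{\B'}$ rather than $S_{\B^{'coop}}=S_{\B'}^{-1}$; with the latter reading the identity would become $(a,S^2(b))$, which is false, e.g. $(S^{-1}(F_i),S(E_i))=v^{-2}(F_i,E_i)$. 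Your computation with $S(F_i)=-F_iK_i'^{-1}$ silently adopts the correct reading, which is a point worth making explicit.) The one caveat: the single substantive computation --- that the $t$-deformed Serre elements in (R5)/(R6) pair to zero against all degree-three $F$-monomials (resp. $E$-monomials) of matching weight, so they lie in the radical --- is deferred in your write-up (``I would compute\dots''), exactly as it is in the paper's (``it can be similarly checked''). You correctly isolate it as the crux and describe the right verification, so your proposal is on par with, and in the antipode step more rigorous than, the published proof; to be complete, either write-up would still need that finite weight-space check carried out.
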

\begin{proof}
Any Hopf pairing of bialgebras is determined by the values on the generators, so the uniqueness is clear.  The process of proof reduces to the existence.

The pairings defined by (\ref{hopfpairing1}) and (\ref{hopfpairing2}) in the proposition can be extended to a bilinear form on  $\B^{'coop} \times \B$ by requiring that the conditions (1), (2) and (3) in definition \ref{def pairing} hold. We only need to verify that the relations (\ref{hopfpairing1}) and (\ref{hopfpairing2}) in $\B^{'}$ and $\B$ are preserved.

It is straightforward to check that the bilinear form preserves all the relations among the $K_i^{\pm1}$ in $\B$ and the $K_i^{'\pm1}$ in $\B'$.
Next, for any $1\leq i, j<n$, we check
  $$( X, K_jE_i)=( X, v^{j\cdot i}t^{\langle j,i \rangle - \langle i, j \rangle}E_iK_j),$$
where $X$ is any word in the $F_i$ and $K_i^{'\pm1}$, $1\leq i<n$.
If $X=K'_kF_l$, the left hand side

\begin{align*}
( X, K_jE_i)&=( \Delta^{op}(K'_k)\Delta^{op}(F_l),  K_j\otimes E_i) \\
    & =( K'_kF_l\otimes K'_k+ K'_kK'_l \otimes K'_kF_l, K_j\otimes E_i ) \\
       &=( K'_kF_l, K_j)( K'_k, E_i)+( K'_kK'_l, K_j)( K'_kF_l, E_i) \\
&= \frac{1}{v^{-1}-v}v^{j\cdot i}t^{\langle j, i\rangle -\langle i, j \rangle}( K'_k, K_j )( K'_k, K_i ) \qquad       (l=i)
\end{align*}
the right hand side
\begin{align*}
   ( X, v^{j\cdot i}t^{\langle j,i \rangle - \langle i, j \rangle}E_iK_j)&=
v^{j\cdot i}t^{\langle j,i \rangle - \langle i, j \rangle}( \Delta^{op}(K'_kF_l),  E_i\otimes K_j)\\
& = v^{j\cdot i}t^{\langle j,i \rangle - \langle i, j \rangle} ( K'_kF_l\otimes K'_k+ K'_kK'_l \otimes K'_kF_l,  E_i\otimes K_j ) \\
& = \frac{1}{v^{-1}-v}v^{j\cdot i}t^{\langle j, i\rangle -\langle i, j \rangle}( K'_k, K_j )( K'_k, K_i ) \qquad       (l=i)
\end{align*}
Hence, $( X, K_jE_i)=( X, v^{j\cdot i}t^{\langle j,i \rangle - \langle i, j \rangle}E_iK_j).$

In particular, it can be similarly checked that the bilinear form preserves all the other relations in $\B$ and $\B'$.

\end{proof}

\begin{defn}(See \cite{Jos}, 3.2)
If there is a Hopf pairing between Hopf algebras $H$ and $H'$, then we may form the Drinfeld double $D(H, H^{'coop})$, where $ H^{'coop}$ is the Hopf algebra having the opposite coproduct to $H$. $D(H, H^{'coop})$ is a Hopf algebra whose underlying vector space is $H\otimes H'$ with the tensor product coalgebra structure. The algebra structure is given by as follows:
$$(a\otimes f)(a'\otimes f')=\sum( S_{H^{'coop}}(f_{(1)}), a'_{(1)})( f_{(3)}, a'_{(3)}) aa'_{(2)}\otimes f_{(2)}f'$$
for $a, a'\in H$ and $f, f'\in H'$. And the antipode $S$ is given by
$$S(a\otimes f)=(1\otimes S_{H^{'coop}}(f))(S_H(a)\otimes 1).$$
\end{defn}
Clearly, the algebras $H$ and $H^{'coop}$ are identified with $H\otimes 1$ and $1\otimes H^{'coop}$ respectively in $D(H, H^{'coop})$.

\begin{prop}
$D(\B, \B^{'coop})$ is isomorphic to $U_{v,t}(sl_n)$.
\end{prop}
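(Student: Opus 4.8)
The plan is to construct an explicit algebra homomorphism $\Phi\colon D(\B,\B^{'coop})\to U_{v,t}(sl_n)$ and show it is bijective, rather than attempt an abstract isomorphism argument. Since $\B$ and $\B^{'coop}$ embed as Hopf subalgebras of $U_{v,t}(sl_n)$ via $a\mapsto a\otimes 1$ and $f\mapsto 1\otimes f$, and since these subalgebras together generate $U_{v,t}(sl_n)$, the natural candidate is the map induced by $E_i, K_i^{\pm1}\mapsto E_i,K_i^{\pm1}$ (from the $\B$-factor) and $F_i, {K_i'}^{\pm1}\mapsto F_i,{K_i'}^{\pm1}$ (from the $\B^{'coop}$-factor). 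First I would verify that $\Phi$ is a well-defined algebra map: the defining relations of $D(\B,\B^{'coop})$ within each tensor factor go over to relations (R1)--(R6) already holding in $U_{v,t}(sl_n)$, so the only thing to check is that the \emph{cross} relations imposed by the double's multiplication rule are satisfied in the target.

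The heart of the computation is to evaluate the double's product formula
\[
(a\otimes f)(a'\otimes f')=\sum ( S_{\B^{'coop}}(f_{(1)}), a'_{(1)})( f_{(3)}, a'_{(3)})\, aa'_{(2)}\otimes f_{(2)}f'
\]
on the generators $a'=E_j$ and $f=F_i$, and to show that the resulting identity, transported by $\Phi$, is precisely the commutation relation (R3), namely $E_iF_j-F_jE_i=\delta_{ij}\frac{K_i-K_i'}{v-v^{-1}}$. Concretely, I would compute $(1\otimes F_i)(E_j\otimes 1)$ using $\Delta(E_j)=E_j\otimes 1+K_j\otimes E_j$ (so $\Delta^{(2)}(E_j)$ has three summands) and the Hopf-pairing values from the previous proposition, $(F_i,E_j)=\delta_{ij}/(v^{-1}-v)$ and $(K_i',K_j)=v^{j\cdot i}t^{\langle j,i\rangle-\langle i,j\rangle}$. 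Matching the pairing coefficients against the $\frac{K_i-K_i'}{v-v^{-1}}$ term is the main obstacle: one must track the antipode $S_{\B^{'coop}}=S_{\B'}^{-1}$ on $F_i$ and the opposite coproduct carefully, and confirm the two pairing factors combine to give exactly $1/(v-v^{-1})$ with the correct sign, so that the $\delta_{ij}$ and the $K_i,K_i'$ terms emerge correctly. Similarly one checks the $K$-with-$E$ and $K$-with-$F$ cross relations reproduce (R2).

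Once $\Phi$ is shown to be a well-defined algebra homomorphism, surjectivity is immediate because its image contains all generators $E_i,F_i,K_i^{\pm1},{K_i'}^{\pm1}$ of $U_{v,t}(sl_n)$. For injectivity I would invoke a triangular decomposition / PBW-type argument: as a vector space $D(\B,\B^{'coop})\cong \B\otimes\B'$, while $U_{v,t}(sl_n)$ admits a triangular decomposition $U^{-}\otimes U^{0}\otimes U^{+}$ with $U^{+}$ spanned by ordered monomials in the $E_i$, $U^{-}$ by ordered monomials in the $F_i$, and $U^{0}$ generated by the $K_i^{\pm1},{K_i'}^{\pm1}$. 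The map $\Phi$ sends the tensor-product basis of $\B\otimes\B'$ onto such ordered monomials, so comparing these bases shows $\Phi$ is a linear bijection. The subtle point here is establishing that the PBW-type basis of $U_{v,t}(sl_n)$ has the expected size, i.e. that no extra collapsing occurs; I would cite the triangular decomposition established in \cite{fl} (or deduce it from the defining relations) to guarantee that $\dim$ of each graded piece matches that of $\B\otimes\B'$, thereby forcing $\Phi$ to be an isomorphism of Hopf algebras.
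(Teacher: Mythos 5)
Your proposal follows essentially the same route as the paper: the same generator-to-generator map, with well-definedness reduced to the cross relations of the double, and the same key computation of $(1\otimes F_j)(E_i\otimes 1)$ via $\Delta^2(E_i)$, $(\Delta^{op})^2(F_j)$, the antipode $S_{\B^{'coop}}(F_j)=-F_j(K_j')^{-1}$, and the pairing values, recovering (R3) exactly as in the paper's proof. Your explicit injectivity argument actually goes beyond the paper, which stops after verifying the relations; just note that you must take the triangular decomposition of $U_{v,t}(sl_n)$ from \cite{fl} (as you propose) rather than from this paper's subsequent corollary, since that corollary is itself deduced from the double realization and would make the argument circular.
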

\begin{proof}
Define the embedding maps
\[
\begin{array}{cccl}
 \iota: & \B & \longrightarrow &   D(\B, \B^{'coop})  \\
   &  E_i & \mapsto &  \widehat{E_i}:=\iota(E_i)=E_i \otimes 1 \\
   & K_i^{\pm1}  & \mapsto &  \widehat{K_i}^{\pm1}:=\iota(K_i^{\pm1})=K_i^{\pm1} \otimes 1, \\
\end{array}
\]
and
\[
\begin{array}{cccl}
 \iota': & \B^{'coop} & \longrightarrow &   D(\B, \B^{'coop})  \\
   &  F_i & \mapsto &  \widehat{F_i}:=\iota(F_i)=1 \otimes F_i\\
   & K_i^{'\pm1} & \mapsto &   \widehat{K_i}^{'\pm1}:=\iota(K_i^{'\pm1})=1 \otimes K_i^{'\pm1}. \\
\end{array}
\]
Then $\B$ and $\B^{'coop}$ can be viewed as subalgebras in $D(\B, \B^{'coop})$.
A map $\varphi$ between  $D(\B, \B^{'coop})$ and $U_{v,t}(sl_n)$ is defined as follows:
\[
\begin{array}{cccl}
  \varphi: & D(\B, \B^{'coop}) & \longrightarrow &   U_{v,t}(sl_n)  \\
   &  \widehat{E_i} & \longmapsto &  \varphi(\widehat{E_i})=E_i, \\
   & \widehat{F_i}  & \longmapsto &  \varphi(\widehat{F_i})=F_i, \\
   & \widehat{K_i}^{\pm1} &  \longmapsto &  \varphi(\widehat{K_i}^{\pm1})=K_i^{\pm1},  \\
   & \widehat{K_i}^{'\pm1} & \longmapsto &  \varphi(\widehat{K_i}^{'\pm1})=K_i^{'\pm1}.
\end{array}
\]

Note that, $\varphi$ preserves the coalgebra structures. Next we will check that $\varphi$ preserves the relations (R1-R6) in $U_{v,t}(sl_n)$.
Consider $\varphi(\widehat{K_j}\widehat{K_i}'-\widehat{K_i}'\widehat{K_j})$. By definition,
$$\widehat{K_j}\widehat{K_i}'=(K_j\otimes 1)(1\otimes K'_i)=K_j\otimes K'_i.$$
To calculate $\widehat{K_i}'\widehat{K_j}$, we have
$$\Delta^2({K_j})=K_j\otimes K_j\otimes K_j, \quad (\Delta^{op})^2({K'_i})=K'_i\otimes K'_i\otimes K'_i,$$
so that
\begin{align*}
 \widehat{K_i}'\widehat{K_j} & =(1\otimes K'_i)(K_j\otimes 1) \\
   & = ( S_{\B^{'coop}}(K'_i), K_j)( K'_i, K_j) K_j\otimes K'_i \\
  & = K_j\otimes K'_i.
\end{align*}
That is, $\varphi(\widehat{K_j}\widehat{K_i}'-\widehat{K_i}'\widehat{K_j})=0=K_jK_i'-K_i'K_j$.
Other relations in $\B$ (resp. in $\B'$) also can be verified  by the same way. We verify  the mixed relation R3.

Similarly, $\widehat{E_i}\widehat{F_j}=(E_i\otimes 1)(1\otimes F_j)=E_i\otimes F_j.$
In order to calculate  $\widehat{F_j}\widehat{E_i}$, we use
\begin{eqnarray*}
% \nonumber to remove numbering (before each equation)
  \Delta^2(E_i) &=& E_i\otimes 1\otimes 1+K_i\otimes E_i\otimes 1+K_i\otimes K_i\otimes E_i, \\
  (\Delta^{op})^2(F_j) &=& 1\otimes 1\otimes F_j+1\otimes F_j\otimes K'_j+F_j\otimes K'_j\otimes K'_j,
\end{eqnarray*}
so that
\begin{align*}
    \widehat{F_j}\widehat{E_i}& =(1\otimes F_j)(E_i\otimes 1) \\
& =  ( S_{\B^{'coop}}(F_j), E_i)( K'_j, 1) 1\otimes K'_j+ ( 1,K_i)( K'_j, 1) E_i\otimes F_j+( 1,K_i)( F_j, E_i) K_i\otimes 1\\
   & = ( -F_j(K'_j)^{-1}, E_i)( K'_j, 1)  \widehat{K_j}'+ ( 1,K_i)( K'_j, 1) \widehat{E_i}\widehat{F_j}+( 1,K_i)( F_j, E_i) \widehat{K_i}\\
  & =  -\frac{\delta_{ij}}{v^{-1}-v}\widehat{K_j}'+\widehat{E_i}\widehat{F_j}+\frac{\delta_{ij}}{v^{-1}-v}\widehat{K_i}.
\end{align*}
That is, $[\widehat{E_i}, \widehat{F_j}]=\frac{\delta_{ij}}{v^{-1}-v}(\widehat{K_j}'-\widehat{K_i})$.
Applying $\varphi$ gives the desired relation (R3) in $U_{v,t}(sl_n)$.
\end{proof}

Let $U^0$ denote the subalgebra of $U_{v,t}(sl_n)$ generated by $K_i^{\pm1}$, $K_i^{'\pm1}$, $1\leq i< n$. Let $U^+$ (resp. $U^-$) denote the subalgebra of $\B$ (resp. $\B'$) generated by $E_i$ (resp. $F_i$), $1\leq i< n$. Then we have $$D(\B, \B^{'}) \cong U^+ \otimes U^0 \otimes U^-.$$

\begin{cor}
The algebra $U_{v,t}(sl_n)$ has a triangular decomposition
$$U_{v,t}(sl_n) \cong U^- \otimes U^0 \otimes U^+.$$

\end{cor}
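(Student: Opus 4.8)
The plan is to obtain the triangular decomposition as the vector-space shadow of the Drinfeld double realization established in Proposition 2. The key structural fact I would use is standard for doubles: the underlying vector space of $D(\B,\B^{'coop})$ is $\B\otimes\B'$, and the multiplication map $\B\otimes\B'\to D(\B,\B^{'coop})$, $b\otimes f\mapsto\widehat{b}\,\widehat{f}$, is a linear isomorphism. Composing with the algebra isomorphism $\varphi\colon D(\B,\B^{'coop})\xrightarrow{\sim}U_{v,t}(sl_n)$ of Proposition 2, multiplication then induces a linear isomorphism $\B\otimes\B'\xrightarrow{\sim}U_{v,t}(sl_n)$.

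Next I would factor each Borel half. Writing $U^0_K$ (resp.\ $U^0_{K'}$) for the Laurent subalgebra generated by the $K_i^{\pm1}$ (resp.\ the $K_i^{'\pm1}$), relations (R1),(R2) let one commute the group-like generators past the $E_i$ and $F_i$, and together with the PBW basis of the nilpotent parts forced by the quantum Serre relations (R5),(R6) (as in \cite{fl}) this gives vector-space factorizations $\B\cong U^+\otimes U^0_K$ and $\B'\cong U^0_{K'}\otimes U^-$. Since the $K_i$ and $K_j'$ are mutually commuting, algebraically independent invertible elements, one also has $U^0\cong U^0_K\otimes U^0_{K'}$. Assembling these identities inside $\B\otimes\B'\cong U_{v,t}(sl_n)$ reproduces exactly the decomposition $U^+\otimes U^0\otimes U^-$ displayed just before the corollary, which I may in any case take as given.

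To reach the ordering stated in the corollary rather than its reverse, I would invoke the evident symmetry of the pairing: the construction of Section \ref{sec2} is symmetric under interchanging $(E_i,K_i)\leftrightarrow(F_i,K_i')$, so running the double construction with the roles of $\B$ and $\B'$ swapped yields, by the identical argument, a multiplication isomorphism realizing $U_{v,t}(sl_n)$ as $U^-\otimes U^0\otimes U^+$. As an independent confirmation of surjectivity of the map
\[
m\colon U^-\otimes U^0\otimes U^+\longrightarrow U_{v,t}(sl_n),
\]
one checks by a sorting argument that (R1)--(R3) rewrite any monomial in the generators as a linear combination of monomials with all $F_i$ on the left, all $K_i^{\pm1},K_i^{'\pm1}$ in the middle, and all $E_i$ on the right, the commutator (R3) only contributing terms absorbed into the $U^0$ factor.

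The main obstacle is the PBW factorization of the Borel halves in the second step --- concretely, the \emph{injectivity} (linear independence) assertion that $U^+$ meets the group part $U^0_K$ trivially and that the products $U^+\cdot U^0_K$ are independent, and likewise for $\B'$. This is the genuine content of the argument and is precisely where the structure theory of $U^+$ from \cite{fl}, i.e.\ that the two-parameter Serre relations (R5) force $U^+$ to have the correct size, must be used; once this is in hand, everything else is formal manipulation inside the double together with the symmetry of the pairing.
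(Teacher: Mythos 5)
Your proposal is correct and follows essentially the same route as the paper, which offers no separate argument for the corollary beyond the displayed identification $D(\B,\B^{'})\cong U^+\otimes U^0\otimes U^-$ read off from the Drinfeld double's underlying vector space $\B\otimes\B'$ together with the isomorphism $\varphi$ of the preceding proposition. Your additional care about the PBW factorization of the Borel halves and the ordering $U^-\otimes U^0\otimes U^+$ versus $U^+\otimes U^0\otimes U^-$ merely makes explicit what the paper leaves implicit.
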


\section{Finite Dimensional Representations Of $U_{v,t}(sl_{n})$ }\label{sec3-tensor rep}

\subsection{The Natural Representation Of $U_{v,t}(sl_{n})$ }

Set $\Lambda=\mathbb{Z}\epsilon_1\oplus
\mathbb{Z}\epsilon_2\oplus\cdots\oplus \mathbb{Z}\epsilon_n$. For any $\lambda=\sum\limits_{j=1}^n \lambda_j \epsilon_j \in \Lambda$, one defines the algebra homomorphism $\widehat{\lambda}: U^0 \rightarrow \Q(v,t)$ by
\begin{equation}\label{homo}
  \widehat{\lambda}(K_i)=v^{\sum\limits_{j=1}^n\lambda_j i\cdot j} t^{\sum\limits_{j=1}^n\lambda_j (\langle i,j\rangle-\langle j,i\rangle)-1}, \qquad
 \widehat{\lambda}(K_i')=v^{-\sum\limits_{j=1}^n\lambda_j i\cdot j} t^{\sum\limits_{j=1}^n\lambda_j (\langle i,j\rangle-\langle j,i\rangle)-1},
\end{equation}
where $\langle i, n \rangle= \delta_{in}$, and
$\langle n, i \rangle=\left\{
   \begin{array}{ll}
     0, & \hbox{$1\leq i< n-1$;} \\
     -1, & \hbox{$i=n-1$;} \\
     1, & \hbox{$i=n$;}
   \end{array}
 \right.$
for $1 \leq i \leq n$.

Let $V_n$ be the $n$-dimensional $\Q(v,t)$ vector space with basis $\{v_1,v_2,\cdots,v_n\}$.
For any $1\leq i, j \leq n$, set $E_{ij}$ be the $n\times n$ matrices with entry $1$ in row $i$ and column $j$ and other  entries $0$.

We define an  $U_{v,t}(sl_{n})$ representation $\rho_n': U_{v,t}(sl_{n}) \times V_n \rightarrow V_n$ by the following way:
\begin{align*}
  \rho_n'(E_i) = E_{i,i+1}, \qquad   \rho_n'(F_i) = E_{i+1,i},  \\
  \rho_n'(K_i) =t^{-1}I_n +  (v-t^{-1})E_{i,i} + (v^{-1}-t^{-1})E_{i+1,i+1}, \\
  \rho_n'(K_i') =t^{-1}I_n + (v^{-1}-t^{-1})E_{i,i} + (v-t^{-1})E_{i+1,i+1}.
\end{align*}
This  follows from the fact that $K_i v_j=v^{\sum\limits_{k=j}^n i\cdot k} t^{\sum\limits_{k=j}^n (\langle i,k\rangle-\langle k,i\rangle)-1}v_j$ for all $1\leq i\leq n-1,\ 1\leq j\leq n$ that $v_j$ corresponds to the weight $\sum\limits_{k=j}^n \epsilon_k$. Thus, $V_n=\bigoplus\limits_{j=1}^n V_{\sum\limits_{k=j}^n \epsilon_k}$ is the natural analogue of the $n$-dimensional representation of $sl_n$, and $(\rho_n',V_n)$ is an irreducible representation of $U_{v,t}(sl_n)$.

\subsection{Tensor Power representations Of $U_{v,t}(sl_{n})$}

\begin{defn}
Let $M, N$ be $U_{v,t}(sl_{n})$-modules. For $a \in U_{v,t}(sl_{n}) $, $ u \in M$ and $v \in N$,  we define $a(u \otimes v) = \Delta(a)(u \otimes v))$, then under such action, $M \otimes N$ is a $U_{v,t}(sl_{n})$-module. We call it the tensor power of modules $M$ and $N$.
\end{defn}

\begin{defn}
More generally, suppose that $M_1, M_2, \cdots M_n $ are $U_{v,t}(sl_{n})$-modules.
For $a \in U_{v,t}(sl_{n})$, $m_i \in M_i$, we define
$a(m_1\otimes m_2\otimes \cdots \otimes m_{n})=\triangle^{n-1}(a)(m_1\otimes m_2\otimes \cdots \otimes m_{n})$,
where $\Delta^{k} = (\Delta \otimes id^{\otimes k - 1})\Delta^{k - 1}$ for any $k\in \mathbb{N}$.
Then $M_1\otimes M_2\otimes \cdots \otimes M_{n}$ is a $U_{v,t}(sl_{n})$-module. We call it the tensor product of modules $M_1, M_2, \cdots ,M_{n}.$
\end{defn}

\begin{rem}
It is easy to know that for any $k\in \mathbb{N}$, $1 \leq j < n,$
$$\Delta^{k-1}(E_j)= \sum_{i=1}^{k}
\underbrace{K_j\otimes \cdots  \otimes K_j}_{i-1}\otimes E_j \otimes
\underbrace{1\otimes \cdots \otimes 1}_{k-i}.$$

$$\Delta^{k-1}(F_j) = \sum_{i=1}^{k}
\underbrace{1\otimes \cdots \otimes 1}_{k-i}\otimes F_j \otimes
\underbrace{K_j' \otimes \cdots \otimes K_j'}_{i-1}.$$

$$\Delta^{k-1}(K_i) = K_i \otimes K_i \otimes \cdots \otimes K_i.$$

$$\Delta^{k-1}(K_i') = K_i' \otimes K_i' \otimes \cdots \otimes K_i'.$$
\end{rem}

Generally,  the $k$-fold tensor power $(\rho_n, V_n^{\otimes k}) $ of $(\rho_n',V_n)$ is also a  $U_{v,t}(sl_n)$-module, where $V_n^{\otimes k}=V_n \otimes V_n \otimes \cdots \otimes V_n$ ($k$ factors).
%\end{prop}

\begin{prop}
For $n\geq k$, if $(\rho_n',V_n)$ is the natural representation of $U_{v,t}(sl_n)$, then $(\rho_n, V_n^{\otimes k}) $ is a cyclic $U_{v,t}(sl_n)$-module generated by $\{v_1,v_2,\cdots,v_n\}$.

\end{prop}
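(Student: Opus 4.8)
The plan is to prove that $V_n^{\otimes k}$ is cyclic by showing that the single vector $\xi:=v_1\otimes v_2\otimes\cdots\otimes v_k$ generates it; this vector is available precisely because the hypothesis $n\ge k$ forces its $k$ entries $1,2,\dots,k$ to be distinct and at most $n$. Writing $M:=U_{v,t}(sl_n)\,\xi$ for the submodule it generates, I would establish $M=V_n^{\otimes k}$ by proving that every basis vector $v_{\mathbf i}=v_{i_1}\otimes\cdots\otimes v_{i_k}$ lies in $M$. The only inputs are the actions through the iterated coproduct of the Remark: on a basis vector, $\Delta^{k-1}(F_j)$ is a sum over slots whose $l$-th term applies $\rho_n'(F_j)=E_{j+1,j}$ in slot $l$ and the diagonal operator $\rho_n'(K_j')$ in each slot to its right, and symmetrically $\Delta^{k-1}(E_j)$ applies $\rho_n'(E_j)=E_{j,j+1}$ in one slot with $\rho_n'(K_j)$ in the slots to its left.

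The basic move is an isolation lemma: if the value $j$ occurs in exactly one slot of a basis vector $w$ (respectively the value $j+1$ occurs in exactly one slot), then $F_j\,w$ (respectively $E_j\,w$) is a nonzero scalar multiple of the single basis vector obtained by changing that one entry from $j$ to $j+1$ (respectively from $j+1$ to $j$). This is because $\rho_n'(F_j)$ kills every $v_m$ with $m\ne j$, so all but one term of $\Delta^{k-1}(F_j)$ vanish, while the surviving diagonal $K_j'$-factors are eigenvalues of $\rho_n'(K_j')$, namely the nonzero monomials $t^{-1},v^{-1},v$ in $v$ and $t$. Starting from $\xi$, whose entries are pairwise distinct, repeated use of this lemma moves any isolated entry up or down by one step, and I would use it first to change the underlying index set and to reach the totally collided vectors such as $v_1^{\otimes k}$ along paths on which the active value is always unique.

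The main obstacle is reaching basis vectors with repeated entries, where the isolation lemma no longer applies: as soon as $j$ sits in several slots, $\Delta^{k-1}(E_j)$ (or $\Delta^{k-1}(F_j)$) produces a genuine multi-term sum. The plan is to resolve this by a collision-and-split induction. I would totally order the basis vectors by the content $\mu(\mathbf i)=(\mu_1,\dots,\mu_n)$, $\mu_m=\#\{l:i_l=m\}$, using a dominance-type order refined by a secondary order on arrangements, and then realize each target $v_{\mathbf i}$ as a distinguished summand of $E_j\,w$ (or $F_j\,w$) for a previously produced $w\in M$ in which every other summand is already known to lie in $M$; since the coefficient of the target is once more a nonzero monomial in $v,t$, solving this one linear relation places $v_{\mathbf i}$ in $M$. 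The delicate point, and where I expect the real work to be, is choosing the ordering so that the unwanted summands are genuinely earlier in the induction, both when the content changes and, more awkwardly, when one only rearranges entries within a fixed content, since the natural transposition move first raises the content in the ordering before returning to it. Once all basis vectors are obtained, $M=V_n^{\otimes k}$ and the module is cyclic. The hypothesis $n\ge k$ enters exactly at the start: it is what supplies a multiplicity-free generator $\xi$, and indeed for $n<k$ every basis vector has a repeated entry and $V_n^{\otimes k}$ need not be cyclic.
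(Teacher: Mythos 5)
Your proposal is essentially the argument the paper itself relies on: the paper omits the proof, deferring to Lemma 6.2 of Benkart--Witherspoon \cite{benkart2001rep}, which proceeds by exactly this raising/lowering induction from the multiplicity-free generator $v_1\otimes v_2\otimes\cdots\otimes v_k$ (the statement's ``generated by $\{v_1,v_2,\cdots,v_n\}$'' is evidently a misprint for this vector), using that $\rho_n'(F_j)=E_{j+1,j}$ annihilates every slot not carrying $v_j$ and that the surviving $K_j'$-eigenvalue coefficients are invertible monomials in $v,t$. The one point you leave open---the ordering in the collide-and-split induction---does go through in a two-stage form: first obtain every arrangement of $k$ distinct values by induction on the number of inversions (collide a uniquely occurring pair of consecutive values via your isolation lemma, then split with $F_a$, the unwanted summand being the lower-inversion vector already in $M$, as in $F_1(v_1\otimes v_1)=v_1\otimes v_2+v^{-1}\,v_2\otimes v_1$), and then reach every repeated-entry basis vector from a suitable distinct-entry one by isolation moves alone, scheduling the one-step raises and lowers so that the value being moved is unique at each moment; so your plan is correct and matches the intended proof.
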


\begin{proof}
The proof is similarly to lemma 6.2 in \cite{benkart2001rep}. We omit it.

\end{proof}

\subsection{The $R$-Matrix}\label{sec.rmatrix}

To obtain a $H_k(v,t)$-representation from $U_{v,t}(sl_n)$-representation $(\rho_n, V_n^{\otimes k}) $,
we shall construct a $R$-matrix.

As above notation, denote by $U^+$ (resp. $U^-$) the subalgebra of $U_{v,t}(sl_n)$ generated by $1$ and $E_i$ (resp. $1$ and $F_i$) for all $1\leq i \leq n-1$.
Then  $U^+$ has a decomposition $U^+=\bigoplus_{\xi \in \Lambda^+}U_\xi^+$, where
\begin{equation}
  U_\xi^+=\{x\in U^+ \mid K_i x= v^{\sum\limits_{j=1}^n\xi_j i\cdot j} t^{\sum\limits_{j=1}^n\xi_j \langle i,j\rangle-\langle j,i\rangle} x K_i,\ 1\leq i \leq n-1\}
\end{equation}
for $\xi=\sum\limits_{j=1}^n\xi_j \epsilon_j$, $\xi_j\geq 0$.

It can be checked that $ U_\xi^+$ is spanned by all the monomials $E_{i_1} E_{i_2} \cdots E_{i_m}$ such that $\xi=\epsilon_{i_1} + \epsilon_{i_2} + \cdots \epsilon_{i_m}$.
For $U^-$ we have similar decomposition $U^-=\bigoplus_{\xi \in \Lambda^+}U_{-\xi}^-$.
Let $d_\xi$ be the $\Q(v, t)$ dimension of $U_\xi^+$. Assume $\{ u_k^\xi \}_{k=1}^{d_\xi}$ is a basis for
$U_\xi^+$, and $\{ v_k^\xi \}_{k=1}^{d_\xi}$ is the dual basis for $U_{-\xi}^-$ with respect to to the Hopf pairing defined by (\ref{hopfpairing1}) and (\ref{hopfpairing2}). That is to say, if $\{E_1\}$ is a basis for $U_{\epsilon_1}^+$, then $\{(v^{-1}-v)F_1\}$ is the dual basis for
$U_{-\epsilon_1}^-$.

If $\lambda=\sum\limits_{j=1}^n \lambda_j \epsilon_j \in \Lambda$, set
\begin{align*}
  K_\lambda=K_1^{\lambda_1}\cdots K_{n-1}^{\lambda_{n-1}}A_n^{\lambda_n}, \qquad
 K'_\lambda=(K'_1)^{\lambda_1}\cdots (K'_{n-1})^{\lambda_{n-1}}B_n^{\lambda_n},
\end{align*}
and
\begin{align*}
  \Theta = \sum_{\xi \in \Lambda^+} \sum_{k=1}^{d_\xi} v_k^\xi \otimes u_k^\xi.
\end{align*}

Let $\widetilde{R}=\widetilde{R}_{V_n,V_n}: V_n \otimes V_n \longrightarrow V_n \otimes V_n$ be the
$R$-matrix defined by
\begin{equation}\label{Rmatrix}
  \widetilde{R}_{V_n,V_n}(v_i \otimes v_j )=\Theta \circ f(v_j \otimes v_i )
\end{equation}
where $f(v_j \otimes v_i )=(K'_\lambda, K_\mu)^{-1}$ when $v_i\in V_\mu$ and $v_j\in V_\lambda$, the Hopf pairing $(-, -)$ is defined by (\ref{hopfpairing1}), (\ref{hopfpairing2}) and
\begin{align*}
  (B_n, A_n)=1, \qquad (B_n, K_j)=v^{\langle n, j \rangle}t^{-\langle n, j \rangle}, \qquad
(K'_i, A_n)=(vt)^{\langle n, i \rangle}.
\end{align*}

More precisely, $\widetilde{R}$ acts on $V_n \otimes V_n$ by
\[
 \widetilde{R} = \sum_{i < j}vt E_{j,i}\otimes E_{i,j} + \sum_{i<j}vt^{-1} E_{i,j}\otimes E_{j,i} + t^{-1}(1 - v^2)\sum_{i < j}E_{j,j}\otimes E_{i,i}
+ \sum_{i}E_{i,i}\otimes E_{i,i}.
\]

\begin{prop}\label{braidrel}
Let $\widetilde{R}_i$ be the $U_{v,t}(sl_n)$-module isomorphism on $V_n^{\otimes k}$ defined by
$$
\widetilde{R}_i(z_1\otimes z_2 \otimes \cdots \otimes z_k)=z_1\otimes z_2 \otimes \cdots \otimes z_{i-1}
\otimes \widetilde{R}(z_i \otimes z_{i+1}) \otimes z_{i+2} \otimes \cdots \otimes z_k,
$$
where $z_1, z_2, \cdots,  z_k \in V_n$. Then $\widetilde{R}_i$ satisfy the Yang-Baxter equations. That is to say, the following braid relations hold:
\[
  \begin{array}{ll}
\medskip
    \widetilde{R}_i \widetilde{R}_j=\widetilde{R}_j \widetilde{R}_i, & \hbox{$\mid i-j \mid \neq 1$;} \\
    \widetilde{R}_i \widetilde{R}_{i+1} \widetilde{R}_i=\widetilde{R}_{i+1} \widetilde{R}_i \widetilde{R}_{i+1}, & \hbox{$1\leq i < k$.}
  \end{array}
\]
\end{prop}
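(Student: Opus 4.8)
The plan is to reduce both families of relations to a single Yang--Baxter identity on three tensor factors and then to check that identity directly from the explicit action of $\widetilde{R}$. The commuting relations for $|i-j|\neq 1$ require no computation: when $|i-j|\geq 2$ the index pairs $\{i,i+1\}$ and $\{j,j+1\}$ are disjoint, so $\widetilde{R}_i$ and $\widetilde{R}_j$ are built from $\widetilde{R}$ acting on mutually disjoint tensor slots with the identity elsewhere, and operators supported on disjoint factors of a tensor product commute. Thus only the second family, the genuine braid relation, carries content.

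For the braid relation it suffices to prove the three-strand identity
\[
(\widetilde{R}\otimes\mathrm{id})(\mathrm{id}\otimes\widetilde{R})(\widetilde{R}\otimes\mathrm{id})
=(\mathrm{id}\otimes\widetilde{R})(\widetilde{R}\otimes\mathrm{id})(\mathrm{id}\otimes\widetilde{R})
\]
on $V_n\otimes V_n\otimes V_n$, since for general $i$ the operators $\widetilde{R}_i$ and $\widetilde{R}_{i+1}$ act only on the consecutive slots $i,i+1,i+2$ and leave all other factors fixed, so the identity for fixed $i$ is exactly the three-strand identity tensored with identities. From the displayed matrix formula one reads off the action on basis vectors:
\[
\widetilde{R}(v_a\otimes v_b)=
\begin{cases}
v_a\otimes v_b, & a=b,\\
vt\,v_b\otimes v_a, & a<b,\\
vt^{-1}v_b\otimes v_a+t^{-1}(1-v^2)\,v_a\otimes v_b, & a>b.
\end{cases}
\]

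The crucial structural observation is that $\widetilde{R}(v_a\otimes v_b)$ always lies in the span of $v_a\otimes v_b$ and $v_b\otimes v_a$, so $\widetilde{R}$ preserves the grading of $V_n^{\otimes 3}$ by the multiset $\{a,b,c\}$ of indices. Hence the three-strand identity decomposes into independent checks on each graded piece: the one-dimensional piece $a=b=c$ (where both sides act as the scalar $1$), the three-dimensional pieces with exactly two equal indices, and the six-dimensional pieces with three distinct indices. In each such piece one computes both ordered compositions on the finitely many ordered basis vectors and compares. I expect the main obstacle to be precisely the all-distinct six-dimensional case: there the off-diagonal swap terms with coefficients $vt$ and $vt^{-1}$ and the diagonal correction term $t^{-1}(1-v^2)$ interact, and because the two parameters $v$ and $t$ appear asymmetrically (unlike the one-parameter case) one must track the combined powers of $v$ and $t$ carefully to confirm that the coefficients on each basis vector agree on the two sides. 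This is the bookkeeping that makes the two-parameter case genuinely different from the classical verification.

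As a conceptual cross-check on the computation, $\widetilde{R}$ is, up to the diagonal normalization $f$ and composition with the flip, the image on $V_n\otimes V_n$ of the canonical element $\Theta$ of the Drinfeld double realization established above, and the canonical element of a Drinfeld double is a universal $R$-matrix satisfying the quantum Yang--Baxter equation; the braided form then satisfies the braid relation abstractly. Since the present $\widetilde{R}$ incorporates the extra generators $A_n,B_n$ and the normalizing pairing $f$, one should note that these amount to a diagonal gauge twist, which preserves the braid relation; I would use this only as an informal confirmation and rely on the explicit case analysis for the actual proof.
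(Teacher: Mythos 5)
The paper states this proposition with no proof at all, so there is nothing of the authors' to compare your argument against; it has to stand on its own, and as written it does not. Your skeleton is correct: the commutation for $\mid i-j\mid\geq 2$ by disjoint supports, the reduction of the braid relation to the three-strand identity on $V_n^{\otimes 3}$, and the observation that $\widetilde{R}$ preserves the decomposition by the multiset of indices are all fine. The gap is that you defer the entire computational content (``this is the bookkeeping'') and, worse, you predict the wrong crux: you declare the repeated-index blocks easy and expect the all-distinct block to be delicate, whereas with the coefficients as printed the identity already fails on a repeated-index block. Write $\alpha=vt$, $\beta=vt^{-1}$, $\gamma=t^{-1}(1-v^2)$, so your table reads, for $x<y$: $\widetilde{R}(v_x\otimes v_x)=v_x\otimes v_x$, $\widetilde{R}(v_x\otimes v_y)=\alpha\,v_y\otimes v_x$, $\widetilde{R}(v_y\otimes v_x)=\beta\,v_x\otimes v_y+\gamma\,v_y\otimes v_x$. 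On $e=v_y\otimes v_x\otimes v_x$ a direct computation gives
\begin{align*}
\widetilde{R}_1\widetilde{R}_2\widetilde{R}_1(e)&=\beta^2\,v_x\otimes v_x\otimes v_y+\beta\gamma\,v_x\otimes v_y\otimes v_x+(\alpha\beta\gamma+\gamma^2)\,e,\\
\widetilde{R}_2\widetilde{R}_1\widetilde{R}_2(e)&=\beta^2\,v_x\otimes v_x\otimes v_y+\beta\gamma\,v_x\otimes v_y\otimes v_x+\gamma\,e,
\end{align*}
so equality forces $\gamma(\alpha\beta+\gamma-1)=0$, i.e.\ $(v^2-1)(1-t^{-1})=0$, which fails for generic $v,t$ (e.g.\ $v=2$, $t=3$ gives $-3\neq-1$ as the coefficient of $e$). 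The structural reason: the braid relation in the mixed sector requires the equal-index diagonal coefficient of $\widetilde{R}$ (here $1$) to be a root of the quadratic $\lambda^2=\gamma\lambda+\alpha\beta$ governing each two-dimensional block, and $1$ is not a root since $\alpha\beta+\gamma=v^2+t^{-1}(1-v^2)\neq 1$. Contrast the modified matrix $R$ of Section \ref{sec hecke alg}, where the equal-index coefficient $v^{-1}t$ \emph{is} a root of $\lambda^2=(v^{-1}-v)t\,\lambda+t^2$; that is exactly why the Hecke and braid relations do hold for $R$.

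So your explicit-verification plan, if executed honestly, would refute the proposition for the matrix as displayed; the displayed formula for $\widetilde{R}$ must be in error relative to the definition $\widetilde{R}(v_i\otimes v_j)=\Theta\circ f(v_j\otimes v_i)$, and no amount of bookkeeping in the all-distinct block can repair the repeated-index blocks. Your Drinfeld-double fallback is the right instinct for a correct proof, but it is not one as written: you would need (i) a proved lemma that twisting the canonical element $\Theta$ by $f(\lambda,\mu)=(K'_\lambda,K_\mu)^{-1}$ preserves the Yang--Baxter equation --- true here because the Hopf-pairing axioms make $f$ multiplicative in each argument, i.e.\ a bicharacter on the weight lattice, but this must be stated and checked --- and (ii) an actual computation of the matrix of $\Theta\circ f\circ\mathrm{flip}$ on $V_n\otimes V_n$, which cannot agree with the printed coefficients by the counterexample above. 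Until one of these is carried out and the matrix entries are corrected accordingly, the proposal proves neither the proposition nor its matrix realization.
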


\section{Hecke Algebra and The Schur-Weyl Duality For $U_{v,t}(sl_n)$}\label{sec hecke alg}

Let $v, t$ be any formal variables. We introduce the Hecke algebra $H_k(v,t)$ as follows.

\begin{defn}\label{hecke}
The Hecke algebra $H_k(v,t)$ be the unital associate algebra over $\Q(v,t)$ with generators $T_{i}, 1 \leq i < k$, subject to the relations:

\begin{enumerate}
  \item[(H1)] $T_iT_{i+1}T_{i}=T_{i+1}T_{i}T_{i+1}$, \ $1\leq i <k$,
  \item[(H2)]  $T_iT_j = T_jT_i$, \ ${\rm if} \ |i - j| \geq 2$,
  \item[(H3)]  $(T_i - v^{-1}t)(T_i + vt) = 0$, \ $1\leq i <k$.
\end{enumerate}

\end{defn}

The $R$-matrix $\widetilde{R}$ defined in section \ref{sec.rmatrix} only satisfies the braid relations. In order to construct an action for two-parameter Hecke algebra $H_k(v,t)$ on $V_n^{\otimes k}$, we must modify the $R$-matrix $\widetilde{R}$.
Set
\begin{align}\label{Rmatix}
  R= \sum_{i < j}t^2 E_{j,i}\otimes E_{i,j} + \sum_{i<j} E_{i,j}\otimes E_{j,i} + (v^{-1} - v)t\sum_{i < j}E_{j,j}\otimes E_{i,i}
+v^{-1}t \sum_{i}E_{i,i}\otimes E_{i,i}.
\end{align}
Let $R_i$ be the action on  $V_n^{\otimes k}$ defined by
$$
R_i(z_1\otimes z_2 \otimes \cdots \otimes z_k)=z_1\otimes z_2 \otimes \cdots \otimes z_{i-1}
\otimes R(z_i \otimes z_{i+1}) \otimes z_{i+2} \otimes \cdots \otimes z_k,
$$
for any $z_i \in V_n$. Furthermore, $R_i$ is an $U_{v,t}(sl_n)$-module isomorphism on $V_n^{\otimes k}$.

\begin{prop}
As above notations, if we let $\delta_n(T_i) = R_i$, then $(\delta_n, V_n^{\otimes k})$ is a representation of $H_k(v,t)$. That is to say, the $U_{v,t}(sl_n)$-representation $V_n^{\otimes k}$ affords a representation of Hecke algebra $H_k(v,t)$.
\end{prop}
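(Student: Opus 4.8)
The plan is to verify that the operators $R_i$ satisfy the three defining relations (H1)--(H3) of $H_k(v,t)$; since the assignment $T_i\mapsto R_i$ is then forced to be an algebra homomorphism, this yields the representation $(\delta_n,V_n^{\otimes k})$. Because $R_i$ acts as $R$ on the factors in positions $i,i+1$ and as the identity elsewhere, each relation localizes: (H2) concerns disjoint pairs of tensor slots and so reduces to the immediate commutation of operators acting on different factors; (H3) reduces to the single operator identity $(R-v^{-1}t)(R+vt)=0$ on $V_n\otimes V_n$; and (H1) reduces to the braid identity $R_1R_2R_1=R_2R_1R_2$ on $V_n\otimes V_n\otimes V_n$. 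Thus it suffices to check (H3) on $V_n^{\otimes 2}$ and (H1) on $V_n^{\otimes 3}$, with (H2) being automatic.

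First I would dispose of (H3). The operator $R$ preserves each subspace spanned by basis tensors with a fixed multiset of indices. On the diagonal vectors $v_i\otimes v_i$ it acts by the scalar $v^{-1}t$, and on each two-dimensional block $\langle v_i\otimes v_j,\, v_j\otimes v_i\rangle$ with $i<j$ it is represented, in that ordered basis, by $\left(\begin{smallmatrix} 0 & 1 \\ t^2 & (v^{-1}-v)t\end{smallmatrix}\right)$, whose characteristic polynomial is $\lambda^2-(v^{-1}-v)t\,\lambda-t^2=(\lambda-v^{-1}t)(\lambda+vt)$. Hence $(R-v^{-1}t)(R+vt)$ vanishes on every block, which is exactly relation (H3).

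The substance of the proof is (H1). Here I would again use that $R$ preserves the multiset of indices, so $V_n^{\otimes 3}$ decomposes into finite-dimensional blocks according to the index multiset $\{a,b,c\}$ occurring in a basis tensor, and it is enough to check $R_1R_2R_1=R_2R_1R_2$ on each block separately. When all three indices coincide, $R_1$ and $R_2$ both act by the scalar $v^{-1}t$ and the identity is trivial. When exactly two indices coincide the relevant blocks are three-dimensional, and when $a<b<c$ are distinct the block is the six-dimensional span of the six permutations of $v_a\otimes v_b\otimes v_c$. On each such block I would write out the matrices of $R_1$ and $R_2$ directly from the action of $R$ (a pure swap with coefficient $t^2$ when $p<q$, and a swap with coefficient $1$ together with the extra diagonal contribution $(v^{-1}-v)t$ on $v_p\otimes v_q$ when $p>q$) and compute the two triple products.

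The main obstacle is precisely the six-dimensional all-distinct block, where the interplay of the two swap coefficients $t^2,1$ and the triangular term $(v^{-1}-v)t$ makes the bookkeeping heaviest; the three- and one-dimensional blocks are comparatively routine. I would also note that although $\widetilde{R}_i$ already satisfies the braid relations by Proposition~\ref{braidrel}, the modification $R$ is \emph{not} a scalar multiple of $\widetilde{R}$: relative to the uniform rescaling of the other terms, the coefficient of $E_{j,j}\otimes E_{i,i}$ carries an extra factor of $t$, so the braid relation for $R$ does not follow formally from that for $\widetilde{R}$ and must be confirmed by the block computation above. The point of the modification is that it is exactly this rescaling that replaces the spectrum of $\widetilde{R}$ by the clean pair $\{v^{-1}t,-vt\}$ required by (H3), while leaving the braid relation intact.
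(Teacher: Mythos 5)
Your proposal is correct, and on relations (H2) and (H3) it coincides in substance with the paper's own proof: the paper verifies $R_i^2=t^2+(v^{-1}-v)tR_i$ by exactly the three cases $l_i>l_{i+1}$, $l_i=l_{i+1}$, $l_i<l_{i+1}$ that your $2\times 2$-block diagonalization packages via the characteristic polynomial $\bigl(\lambda-v^{-1}t\bigr)\bigl(\lambda+vt\bigr)$. Where you genuinely diverge is (H1): the paper simply asserts that the braid relations for the $R_i$ follow from Proposition \ref{braidrel}, i.e.\ from the braid relations for $\widetilde{R}_i$, whereas you verify them directly on the blocks indexed by multisets of indices. Your skepticism about that shortcut is well founded: as you observe, $R$ is not a scalar multiple of $\widetilde{R}$ (relative to the uniform factor $v^{-1}t$, the $E_{j,j}\otimes E_{i,i}$ coefficient carries an extra $t$), and since both rescaling and conjugation by $D\otimes D$ with $D$ diagonal leave the ratio of the $E_{j,j}\otimes E_{i,i}$ coefficient to the diagonal coefficient invariant, no formal transfer of the braid property from $\widetilde{R}$ to $R$ is available; your direct verification is therefore not optional rigor but a needed supplement to the paper's argument. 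One refinement to your plan: the six-dimensional all-distinct block that you flag as the heaviest is in fact automatic --- for any operator acting by an ascending swap weight $\alpha$, a descending swap weight $\beta$, a triangular term $\gamma$ and a diagonal scalar $d$ (uniformly over pairs), the identity $R_1R_2R_1=R_2R_1R_2$ holds on all-distinct words for arbitrary $\alpha,\beta,\gamma$. The binding constraint lives in the three-dimensional repeated-index blocks, e.g.\ the span of $v_b\otimes v_a\otimes v_a$ and its permutations with $a<b$, where comparing the coefficients of $v_b\otimes v_a\otimes v_a$ on the two sides forces $\gamma d^2=\alpha\beta\gamma+\gamma^2 d$, i.e.\ the diagonal scalar $d$ must be a root of the Hecke quadratic $\lambda^2-\gamma\lambda-\alpha\beta$. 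For $R$ one has $d=v^{-1}t$, $\gamma=(v^{-1}-v)t$, $\alpha\beta=t^2$, and indeed $(v^{-1}t)^2=(v^{-1}-v)t\cdot v^{-1}t+t^2$, so your computation closes successfully; note that for the matrix $\widetilde{R}$ as printed ($d=1$, $\gamma=t^{-1}(1-v^2)$, $\alpha\beta=v^2$) this same identity fails for generic $t$, which confirms that the braid relation for $R$ had to be established independently, exactly as you do. In short, your route is self-contained and repairs the one step the paper handles by citation, at the cost of finite block computations that are lighter than you anticipated.
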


\begin{proof}
The braid relations and  the commutativity of non-adjacent reflections follow from proposition \ref{braidrel}. We only need to check the relation $(H3)$ in definition \ref{hecke}.
For any subset $\{l_1, l_2, \cdots, l_k  \} \subseteq [1, n]$, if $l_i>l_{i+1}$, then
\begin{align*}
  (R_i)^2 (v_{l_1} \otimes \cdots \otimes v_{l_k}) & = R_i(v_{l_1} \otimes \cdots \otimes v_{l_{i-1}}\otimes ( v_{l_{i+1}}\otimes v_{l_i}+(v^{-1}-v)tv_{l_{i}}\otimes v_{l_i+1} ) \otimes \cdots \otimes v_{l_k}) \\
   & =t^2(v_{l_1} \otimes \cdots \otimes v_{l_k})+(v^{-1}-v)t R_i(v_{l_1} \otimes \cdots \otimes v_{l_k}) \\
& = (t^2+(v^{-1}-v)t R_i)(v_{l_1} \otimes \cdots \otimes v_{l_k}).
\end{align*}
If $l_i=l_{i+1}$, we have
\begin{align*}
(t^2+(v^{-1}-v)t R_i)(v_{l_1} \otimes \cdots \otimes v_{l_k}) &= (t^2+(v^{-1}-v)v^{-1}t^2 )(v_{l_1} \otimes \cdots \otimes v_{l_k})\\
& = v^{-2}t^2(v_{l_1} \otimes \cdots \otimes v_{l_k}) \\
& = (R_i)^2 (v_{l_1} \otimes \cdots \otimes v_{l_k}).
\end{align*}
For the last case $l_i<l_{i+1}$, we have
\begin{align*}
(R_i)^2 (v_{l_1} \otimes \cdots \otimes v_{l_k}) & =t^2 R_i(v_{l_1} \otimes \cdots \otimes v_{l_{i-1}}\otimes  v_{l_{i+1}}\otimes v_{l_i}  \otimes \cdots \otimes v_{l_k}) \\
& =t^2(v_{l_1} \otimes \cdots \otimes v_{l_k})+(v^{-1}-v)t^3(v_{l_1} \otimes \cdots \otimes v_{l_{i-1}}\otimes  v_{l_{i+1}}\otimes v_{l_i}  \otimes \cdots \otimes v_{l_k}) \\
& =(t^2+(v^{-1}-v)t R_i)(v_{l_1} \otimes \cdots \otimes v_{l_k}).
\end{align*}

%\[
%\begin{array}{cl}
%\medskip
%   & (R_i)^2 (v_{l_1}\otimes v_{l_2} \otimes \cdots \otimes v_{l_k}) \\
%\medskip
 %= & \left\{
%       \begin{array}{ll}
%         R_i(v_{l_1} \otimes \cdots \otimes v_{l_{i-1}}\otimes (t v_{l_{i+1}}\otimes %v_{l_i}+(v^{-1}-v)tv_{l_{i}}\otimes v_{l_i+1} ) \otimes \cdots \otimes v_{l_k}), & \hbox{$l_i>l_{i+1}$;} \\
%\medskip
 %       R_i(v^{-1}tv_{l_1} \otimes \cdots \otimes v_{l_{i-1}}\otimes v_{l_{i}}\otimes %v_{l_i}\otimes \cdots\otimes v_{l_k}) , & \hbox{$l_i=l_{i+1}$;}\\
%\medskip
%        R_i(tv_{l_1} \otimes \cdots \otimes v_{l_{i-1}}\otimes v_{l_{i+1}}\otimes v_{l_i} \otimes \cdots \otimes v_{l_k}) , & \hbox{$l_i<l_{i+1}$;}
 %      \end{array}
%     \right.\\
%\medskip
%=& \left\{
%       \begin{array}{ll}
%         v_{l_1} \otimes \cdots \otimes ((1+(v^{-1}-v)^2)t^2 v_{l_{i}}\otimes %v_{l_{i+1}}+(v^{-1}-v)t^2v_{l_{i+1}}\otimes v_{l_i} ) \otimes \cdots \otimes v_{l_k}, & \hbox{$l_i>l_{i+1}$;} \\
%\medskip
%        v^{-2}t^2v_{l_1} \otimes \cdots  v_{l_{i-1}}\otimes v_{l_{i}}\otimes v_{l_i}\otimes \cdots\otimes v_{l_k} , & \hbox{$l_i=l_{i+1}$;}\\
%\medskip
%        v_{l_1} \otimes \cdots \otimes (t^2 v_{l_{i}}\otimes v_{l_{i+1}}+(v^{-1}-v)t^2 v_{l_{i+1}}\otimes v_{l_i}) \otimes \cdots \otimes v_{l_k}) , & \hbox{$l_i<l_{i+1}$;}
 %      \end{array}
%     \right.\\
%\medskip
%=& (t^2+(v^{-1}-v)t R_i )(v_{l_1}\otimes v_{l_2} \otimes \cdots \otimes v_{l_k}).
%\end{array}
%\]
\end{proof}

This leads to the Schur-Weyl duality between $U_{v,t}(sl_n)$ and $H_k(v,t)$.

\begin{thm} Assume $v^2$ is not a root of unity. Then
\begin{enumerate}
  \item $\delta_n(H_k(v,t)) = End_{U_{v,t}(sl_n)}(V_n^{\otimes k});$
  \item for $n \geq k$, we have $End_{U_{v,t}(sl_n)}(V_n^{\otimes k})\cong H_k(v,t).$
\end{enumerate}
%$$\delta_n(H_k(v,t)) = End_{U_{v,t}(sl_n)}(V_n^{\otimes k});$$
%$$\rho_n(U_{v,t}(sl_n)) = End_{H_k(v,t)}(V_n^{\otimes k}).$$
%In particular, if $n \geq k$, then $End_{U_{v,t}(sl_n)}(V_n^{\otimes k})\cong H_k(v,t).$
\end{thm}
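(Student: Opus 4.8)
The plan is to run the standard double-centralizer (Schur--Weyl) argument, exactly as in the one-parameter case of Jimbo and the two-parameter case of Benkart--Witherspoon. Write $\mathcal{A}=\rho_n(U_{v,t}(sl_n))$ and $\mathcal{H}=\delta_n(H_k(v,t))$ for the two images inside $End_{\Q(v,t)}(V_n^{\otimes k})$. Since each $R_i$ is a $U_{v,t}(sl_n)$-module isomorphism, the two images commute, so at once $\mathcal{H}\subseteq End_{U_{v,t}(sl_n)}(V_n^{\otimes k})$; the whole content of (1) is the reverse inclusion. The hypothesis that $v^2$ is not a root of unity is used twice: first to guarantee that the generic Hecke algebra $H_k(v,t)$ is split semisimple of dimension $k!$, with irreducible modules $H^\lambda$ indexed by partitions $\lambda\vdash k$ and $\dim H^\lambda=f^\lambda$ (the number of standard tableaux of shape $\lambda$), so that $\sum_{\lambda\vdash k}(f^\lambda)^2=k!$; and second to guarantee that $V_n^{\otimes k}$ is a semisimple $U_{v,t}(sl_n)$-module.

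The key step is to identify the bimodule decomposition
\[
V_n^{\otimes k}\;\cong\;\bigoplus_{\substack{\lambda\vdash k\\ \ell(\lambda)\leq n}} L_\lambda\otimes H^\lambda,
\]
where the $L_\lambda$ are the pairwise non-isomorphic irreducible $U_{v,t}(sl_n)$-summands and $H^\lambda$ appears as the multiplicity space carrying the commuting $H_k(v,t)$-action. I would establish this by a character argument: the weight multiplicities of $V_n^{\otimes k}$ under $U^0$ are read off from $\rho_n'$ exactly as in the classical $gl_n$ case, so the formal character of $V_n^{\otimes k}$ is the same symmetric function, namely $(x_1+\cdots+x_n)^k=\sum_{\lambda\vdash k}f^\lambda s_\lambda$, as for classical $sl_n$. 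Semisimplicity then forces the multiplicity of the irreducible of highest weight $\lambda$ to equal $f^\lambda$ and restricts the occurring $\lambda$ to partitions of $k$ with at most $n$ parts.

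Granting the decomposition, both parts follow from the double-centralizer theorem applied to the semisimple module $V_n^{\otimes k}$. On the one hand $End_{U_{v,t}(sl_n)}(V_n^{\otimes k})\cong\bigoplus_{\ell(\lambda)\leq n}End_{\Q(v,t)}(H^\lambda)=\bigoplus_{\ell(\lambda)\leq n}M_{f^\lambda}(\Q(v,t))$; on the other hand $\mathcal{H}$, being a quotient of the semisimple algebra $H_k(v,t)$, is the sum of exactly those matrix blocks $M_{f^\lambda}$ for which $H^\lambda$ acts nontrivially on $V_n^{\otimes k}$, that is those with $\ell(\lambda)\leq n$. Comparing the two, $\mathcal{H}=End_{U_{v,t}(sl_n)}(V_n^{\otimes k})$, which is (1). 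For (2), when $n\geq k$ every partition of $k$ has at most $k\leq n$ parts, so no block is lost: $\dim End_{U_{v,t}(sl_n)}(V_n^{\otimes k})=\sum_{\lambda\vdash k}(f^\lambda)^2=k!=\dim H_k(v,t)$, and the surjection $\delta_n$ is then forced to be an isomorphism.

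The main obstacle is the bimodule decomposition, and within it the two quantitative claims: that the multiplicity of each irreducible $U_{v,t}(sl_n)$-constituent is exactly $f^\lambda$, and --- for the injectivity of $\delta_n$ underlying (2) --- that no partition of $k$ is missing when $n\geq k$. The cleanest route I see to pin these down is to match $V_n^{\otimes k}$ against the classical (or one-parameter) Schur--Weyl decomposition via a flat specialization in the parameters, using generic semisimplicity to transport multiplicities; the cyclicity of $V_n^{\otimes k}$ for $n\geq k$ established earlier ensures that the highest-weight vectors of the top partitions are genuinely present, which is what secures the injectivity of $\delta_n$.
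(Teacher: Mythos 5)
Your overall strategy --- the double-centralizer argument for a semisimple bimodule --- is the standard one, and for part (1) it is essentially what the paper does, since the paper simply defers (1) to \cite{BW}. For part (2), however, the paper argues differently and much more elementarily: for $n\geq k$ it uses the earlier proposition that $V_n^{\otimes k}$ is cyclic, generated by $\underline{v}=v_1\otimes\cdots\otimes v_k$, so any $f\in End_{U_{v,t}(sl_n)}(V_n^{\otimes k})$ is determined by $f(\underline{v})$, which by weight considerations lies in the span of the $k!$ vectors $v_{\sigma(1)}\otimes\cdots\otimes v_{\sigma(k)}$; it then exhibits, for each $\sigma\in\mathfrak{S}_k$, an explicit operator $R^{\sigma}=\delta_n(T^{\sigma})$ (built from factors $((v-v^{-1})t\,Id+R_{i_l})$ or $t^{-2}R_{i_l}$ according as adjacent entries decrease or increase) with $R^{\sigma}(\underline{v})=v_{\sigma(1)}\otimes\cdots\otimes v_{\sigma(k)}$. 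This yields surjectivity of $\delta_n$, the count $\dim End_{U_{v,t}(sl_n)}(V_n^{\otimes k})=k!=\dim H_k(v,t)$, and hence injectivity, with no appeal to semisimplicity of $V_n^{\otimes k}$, to characters of irreducible $U_{v,t}(sl_n)$-modules, or to the classification of $H_k(v,t)$-irreducibles --- all of which your route must supply and none of which is proved in this paper independently of the theorem.

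Beyond the difference in route, your proposal as written has one genuine gap at its central step. Commutativity gives $\mathcal{H}\subseteq End_{U_{v,t}(sl_n)}(V_n^{\otimes k})=\bigoplus_{\lambda}End_{\Q(v,t)}(M_\lambda)$, where $M_\lambda$ is the multiplicity space of $L_\lambda$, and your character argument (granting semisimplicity of $V_n^{\otimes k}$, which you assert but do not prove) gives only $\dim M_\lambda=f^{\lambda}$. It does not give that $\mathcal{H}$ acts irreducibly on each $M_\lambda$ with pairwise distinct isotypes, i.e.\ that $M_\lambda\cong H^{\lambda}$ as $\mathcal{H}$-modules; a quotient of the split semisimple $H_k(v,t)$ is indeed a direct sum of full matrix blocks, but such a sum can sit inside $\bigoplus_{\lambda}End(M_\lambda)$ without being all of it (for instance $\mathcal{H}$ could act on some $M_\lambda$ reducibly or by a smaller isotype, as scalars do on a two-dimensional multiplicity space). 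Supplying this identification is exactly the double-centralizer input that \cite{benkart2001rep} prove from the other side, by showing $\rho_n(U_{v,t}(sl_n))=End_{\mathcal{H}}(V_n^{\otimes k})$ first --- or that the paper's explicit $R^{\sigma}$ construction replaces. Note also that you cannot invoke the paper's Corollary \ref{decom} here, since in the paper it is deduced from this theorem; and your closing appeal to cyclicity ``securing injectivity'' conflates two steps: cyclicity gives the upper bound $\dim End_{U_{v,t}(sl_n)}(V_n^{\otimes k})\leq k!$, while injectivity follows only after surjectivity of $\delta_n$ is established. Your suggested flat specialization to the classical case could in principle close the gap, but as written it is a gesture rather than an argument.
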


\begin{proof}
For conclusion (1), the proof is similarly to \cite{BW}. We consider the conclusion (2).
Assume $f\in End_{U_{v,t}(sl_n)}(V_n^{\otimes k})$, $\underline{v}=v_1\otimes v_2\otimes \cdots \otimes v_k \in V^{\otimes k}$, then $f(\underline{v})$ must be the linear combinations of $v_{\sigma(1)}\otimes v_{\sigma(2)}\otimes \cdots \otimes v_{\sigma(k)}$ for some $\sigma \in \mathfrak{S}_k$.
We will show that there is an element $T^\sigma \in H_k(v,t)$ such that $R^\sigma := \delta_n(T^\sigma)$ in $End_{U_{v,t}(sl_n)}(V_n^{\otimes k})$, $R^\sigma(\underline{v})=v_{\sigma(1)}\otimes v_{\sigma(2)}\otimes \cdots \otimes v_{\sigma(k)}$.
For any element $\sigma$ in $\mathfrak{S}_k$, $\sigma$ can be written as a product of transpositions, denoted by $\sigma=\tau_{i_1}\cdots \tau_{i_m}$, where $\tau_{i_l}=(i_l i_l+1)$.
For distinct index $j_i, \cdots, j_k$, we set
\begin{align*}
  R^{\tau_{i_l}}(v_{j_1}\otimes \cdots \otimes v_{j_k}) & = \left\{
                                               \begin{array}{ll}
                                                 ((v-v^{-1})t Id+R_{i_l})(v_{j_1}\otimes \cdots \otimes v_{j_k}), & \hbox{if $j_{i_l}> j_{i_l+1}$;} \\
                                                 t^{-2}R_{i_l}(v_{j_1}\otimes  \cdots \otimes v_{j_k}), & \hbox{if $j_{i_l}< j_{i_l+1}$.}
                                               \end{array}
                                             \right.
 \\
\end{align*}
Then defining $R^\sigma=R^{\tau_{i_m}}\circ \cdots \circ R^{\tau_{i_1}}$ in $End_{U_{v,t}(sl_n)}(V_n^{\otimes k})$, it can be checked that
$R^\sigma(\underline{v})=v_{\sigma(1)}\otimes v_{\sigma(2)}\otimes \cdots \otimes v_{\sigma(k)}$. Therefore, the map $\delta_n: H_k(v,t) \rightarrow End_{U_{v,t}(sl_n)}(V_n^{\otimes k})$ is a surjective, and
\begin{align*}
  End_{U_{v,t}(sl_n)}(V_n^{\otimes k}) & =span_{\Q(v,t)}\{ R^{\sigma} | \sigma \in \mathfrak{S}_k \}.
\end{align*}
Consequently, $dim_{\Q(v,t)}End_{U_{v,t}(sl_n)}(V_n^{\otimes k})=k!=dim_{\Q(v,t)}H_k(v,t)$.
It follows that (2) holds.

\end{proof}

\begin{cor}\label{decom}
Assume $v^2$ is not a root of unity. The space $V^{\otimes m}$ as an $U_{v,t}(sl_n)\otimes H_k(v,t)$-module has the decomposition
\begin{equation*}
V^{\otimes k}\cong \bigoplus_{\lambda} V_{\lambda}\otimes V^{\lambda},
\end{equation*}
where the partition $\lambda$ of $k$ runs over the set of partitions such that
$l(\lambda)\leq n$,
$V_{\lambda}$ is the $U_{v,t}(sl_n)$-module associated to $\lambda$,
$V^{\lambda}$ is the $H_k(v,t)$-module corresponding to $\lambda$.
\end{cor}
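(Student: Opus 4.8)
The plan is to deduce the decomposition from the general double centralizer theorem for semisimple algebras, using part (1) of the preceding theorem as the crucial input. Write $W = V_n^{\otimes k}$, let $A = \delta_n(H_k(v,t)) \subseteq \mathrm{End}_{\Q(v,t)}(W)$, and let $B$ denote the image of $U_{v,t}(sl_n)$ in $\mathrm{End}_{\Q(v,t)}(W)$. Part (1) of the theorem says precisely that $A = \mathrm{End}_B(W)$. To run the double centralizer machine I first need $A$ to be a (split) semisimple algebra, and this is exactly where the hypothesis that $v^2$ is not a root of unity enters: the two eigenvalues of each $T_i$ in (H3) are $v^{-1}t$ and $-vt$, whose ratio is $-v^{-2}$, so the standard semisimplicity criterion for the type $A$ Hecke algebra (genericity of the quantum parameter) shows that $H_k(v,t)$ is split semisimple over $\Q(v,t)$, and hence so is its homomorphic image $A$. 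I would cite the standard references for this together with the classification of the simple $H_k(v,t)$-modules $\{V^\lambda\}$ by the partitions $\lambda$ of $k$.

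With $A$ semisimple, I would apply the double centralizer theorem in its module-theoretic form. Decomposing $W$ into $A$-isotypic components $W \cong \bigoplus_\lambda (V^\lambda)^{\oplus m_\lambda}$ over those $\lambda$ with $m_\lambda := \dim_{\Q(v,t)} \mathrm{Hom}_A(V^\lambda, W) > 0$, the theorem gives that $B = \mathrm{End}_A(W)$ is semisimple, that the multiplicity spaces $V_\lambda := \mathrm{Hom}_A(V^\lambda, W)$ are pairwise nonisomorphic simple $B$-modules, and that as a $B \otimes A$-module
\[
W \cong \bigoplus_\lambda V_\lambda \otimes V^\lambda,
\]
the sum running over the same set of $\lambda$. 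Since $B$ is the image of $U_{v,t}(sl_n)$, each $V_\lambda$ is an irreducible $U_{v,t}(sl_n)$-module, and this already yields the asserted decomposition once we identify the index set and match $V_\lambda$ with the module associated to $\lambda$.

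The remaining and most delicate point, which I expect to be the main obstacle, is the identification of which partitions occur, namely $l(\lambda) \leq n$, together with the identification of $V_\lambda$ as the irreducible highest-weight $U_{v,t}(sl_n)$-module attached to $\lambda$. For this I would carry out the weight-space analysis of $W$: the natural module $V_n$ carries the $n$ weights $\sum_{j \leq k}\epsilon_k$, so the weights of $W$ are sums of $k$ of these, and a highest-weight vector of type $\lambda$ forces $\lambda$ to have at most $n$ nonzero parts, since a partition with $l(\lambda) > n$ would require more than $n$ distinct basis indices and hence cannot be realized. Conversely, for each $\lambda$ with $l(\lambda) \leq n$ one exhibits a nonzero highest-weight vector, equivalently shows $m_\lambda \neq 0$, so that exactly these $\lambda$ occur; computing the highest weight of $V_\lambda$ then pins it down as the claimed module. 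This highest-weight bookkeeping, the two-parameter analogue of the classical $gl_n$/$sl_n$ Schur--Weyl combinatorics, is the technical heart of the argument, whereas the passage through the double centralizer theorem is formal once semisimplicity is in hand.
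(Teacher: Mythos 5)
Your overall route is the one the paper intends: the corollary is stated with no proof at all, as an immediate consequence of the Schur--Weyl theorem, and the unwritten argument is exactly your double-centralizer one. Your semisimplicity step is sound: rescaling $T_i\mapsto vt^{-1}T_i$ turns relation (H3) into the standard one-parameter quadratic relation with parameter $v^2$, so $H_k(v,t)$ is the generic type $A$ Hecke algebra over $\Q(v,t)$, split semisimple with simples $V^\lambda$ indexed by partitions of $k$; and your highest-weight bookkeeping for cutting the index set to $l(\lambda)\leq n$ is the right mechanism and matches the weights $\sum_{m\geq j}\epsilon_m$ of the natural module computed in the paper.

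There is, however, one genuine logical gap, at the step ``the theorem gives that $B=\mathrm{End}_A(W)$.'' Part (1) of the theorem gives the \emph{opposite} centralizer equality, $A=\mathrm{End}_B(W)$, and with $A$ semisimple the double centralizer theorem then yields that $C:=\mathrm{End}_A(W)$ is semisimple, that $\mathrm{End}_C(W)=A$, and that the multiplicity spaces $\mathrm{Hom}_A(V^\lambda,W)$ are simple over $C$ --- but it does \emph{not} yield $B=C$. The inclusion $B\subseteq C$ can be strict even when $\mathrm{End}_B(W)=A$ is semisimple: take $W=K^2$ and $B$ the upper-triangular matrices, so that $\mathrm{End}_B(W)=K$ is semisimple, yet $\mathrm{End}_K(W)=M_2(K)\neq B$ and $W$ is not $B$-semisimple. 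So from part (1) alone you cannot conclude that the multiplicity spaces are irreducible over $U_{v,t}(sl_n)$. The missing input is complete reducibility of $V_n^{\otimes k}$ as a $U_{v,t}(sl_n)$-module (equivalently, semisimplicity of the image $B$, which forces $B=B''=C$), the two-parameter analogue of the complete-reducibility results of \cite{benkart2001rep, BW}; alternatively, one can prove directly that $U_{v,t}(sl_n)\to\mathrm{End}_A(W)$ is surjective. Your concluding highest-weight analysis, as sketched, shows each candidate $\lambda$ with $l(\lambda)\leq n$ occurs and computes the highest weight, but exhibiting a highest-weight vector only shows the multiplicity space \emph{contains} a highest-weight submodule; without semisimplicity (or a dimension count) it does not identify the multiplicity space as the irreducible $V_\lambda$. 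Supplying complete reducibility first repairs the argument, and is also what the paper itself tacitly assumes.
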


\section{The Primitive Orthogonal Idempotents Of  $H_k(v,t)$}\label{sec idempotent}

For any $i=1, 2, k-1$, let $s_i=(i, i+1)$ be the transposition in the symmetric group $S_k$. Choose a reduced decomposition $w=s_{i_1}\cdots s_{i_l}$ for $w\in S_k$, denote $T_{w}=T_{i_1}\cdots T_{i_l}$. Then
$T_{w}$ does not depend on the reduced decomposition,
and the set $\{T_w\ |\ w\in S_k\}$ is a basis of
$H_k(v,t)$ over $\Q(v, t)$.

The Jucys-Murphy elements
$y_1, \cdots, y_k$ of $H_k(v,t)$ are defined inductively by
\begin{gather}
y_1 = 1,\qquad
y_{i+1} =t^{-2} T_i y_i T_i   \qquad\text{for}\quad i=1, \cdots, k-1.\label{yi}
\end{gather}
These elements satisfy
\[
y_i T_l=T_l y_i  \qquad\text{for}\quad l\neq i, \ i-1.
\]

Furthermore, the elements $y_i$ can be written as follows:
\[
\begin{array}{ccc}
  y_i & = & 1+(v^{-1}-v)t^{-1}(T_{(1\ i)}+T_{(2\ i)}+\cdots +T_{(i-1\ i)}),
\end{array}
\]
where $T_{(m\ n)}$ belong to $H_k(v,t)$ associated to the transposition $(m\ n)\in S_k$.
In particular, $y_1, \cdots, y_k$ generate a commutative subalgebra of $H_k(v,t)$.

For any $i=1,\cdots, k$, we let $w_i$ denote the unique longest element
of the symmetric group $S_i$ which is regarded as the natural subgroup of $S_k$.
The corresponding elements $T_{w_i}\in H_k(v,t)$
are then given by $T_{w_1}=1$ and
\begin{equation}
\begin{aligned}
T_{w_i}&=T_1(T_2T_1)\cdots(T_{i-2}T_{i-3}\cdots T_1)(T_{i-1}T_{i-2}\cdots T_1)\\
&=(T_{1}\cdots T_{i-2}T_{i-1})(T_{1}\cdots T_{i-3}T_{i-2})\cdots (T_1T_2)T_1,\ i=2,\cdots, k.
\end{aligned}
\end{equation}

It is easily check that
\begin{equation}
\begin{aligned}
T_{w_i}T_j=T_{i-j}T_{w_i},\ \ 1\leq j< i\leq k,\\
T_{w_i}^2=t^{2(k-1)}y_1y_2\cdots y_i, \ \ i=1,\cdots,k.
\end{aligned}
\end{equation}

Following \cite{nazar}, for any $i=1,\cdots, k$, we define the elements:
\begin{equation}\label{Txy}
  T_i(x,y)=t^{-1}T_i+\frac{(v^{-1}-v)x}{y-x},
\end{equation}
where $x$ and $y$ are complex variables. We will regard the $T_i(x, y)$ as rational functions in $x$ and $y$ with values in $H_k(v,t)$.
These functions satisfy the braid  relations:
\begin{equation}
T_i(x,y)T_{i+1}(x,z)T_i(y,z)=T_{i+1}(y,z)T_i(x,z)T_{i+1}(x,y),
\end{equation}
and
\begin{equation}
T_i(x,y)T_i(y, x)=\frac{(x-v^{-2}y)(x-v^2y)}{(x-y)^2}.
\end{equation}

Following \cite{dj}, we will identify
a partition $\lambda=(\lambda_1,\dots,\lambda_l)$ of $k$
with its Young diagram which is
a left-justified array of rows of cells such that the first row
contains $\lambda_1$ cells, the second row contains
$\lambda_2$ cells, etc.
A cell $\tau$ outside $\lambda$ is called  addable to $\lambda$
if the union of the cell $\tau$ and $\lambda$ is a Young diagram.
A tableau $\mathcal{T}$ of shape $\lambda$ is obtained by
filling in the cells of the diagram
bijectively with the numbers $1, \cdots, k$.
A tableau $\mathcal{T}$ is called standard if its entries
increase along the rows and down the columns.
If a cell occupied by $i$ occurs in row $m$ and column $n$,
its $(v,v^{-1})$- content $\sigma_i$ will be defined as $v^{-2(n-m)}$.

In accordance to \cite{dj}, a set of primitive orthogonal
idempotents $\{E^{\lambda}_{\mathcal{T}}\}$ of $H_k(v,t)$, parameterized
by partitions $\lambda$ of $k$ and standard tableaux $\mathcal{T}$ of shape $\lambda$
can be constructed inductively as follows.
If $k=1$, set $E^{\lambda}_{\mathcal{T}}=1$. For $k\geq 2$, one defines inductively that
\begin{equation}\label{E}
  E_{\mathcal{T}}^{\lambda}=E_\mathcal{U}^{\mu}\frac{(y_k-\rho_1)\cdots(y_k-\rho_l)}
{(\sigma-\rho_1)\cdots(\sigma-\rho_l)},
\end{equation}
where $\mathcal{U}$ is the tableau of shape $\mu$ obtained form $\mathcal{T}$ by removing the cell $\alpha$ occupied by $k$,  and
$\rho_1,\cdots,\rho_l$ are the $(v,v^{-1})$-contents of all the addable cells
of $\mu$ except for $\alpha$, while $\sigma$ is the $(v,v^{-1})$-content of $\alpha$.

These elements become a family of primitive orthogonal idempotents of $H_k(v,t)$. Indeed,
if $\lambda$ and $\lambda'$ are distinct partitions of $k$, and $\mathcal{T}$ (respectively $\mathcal{T'}$) is any standard tableau of shape  $\lambda$ (respectively $\lambda'$), then we have
\begin{equation}
E_\mathcal{T}^{\lambda}E_{\mathcal{T'}}^{\lambda'}=\delta_{\lambda,\lambda'}
\delta_{\mathcal{T},\mathcal{T'}}E_\mathcal{T}^{\lambda}.
\end{equation}
 Moreover,
\begin{equation}
\sum_{\lambda}\sum_{\mathcal{T}}E_{\mathcal{T}}^{\lambda}=1,
\end{equation}
summed over all partitions $\lambda$ of $k$ and all the standard tableaux $\mathcal{T}$ of shape $\lambda$.

\begin{ex}\label{ex1}
For $k=2$, then $\lambda_1=(1,1)$ and $\lambda_2=(2)$ are all possible partitions of $k$. Set
$\mathcal{T}_1={\scriptsize\young(1,2)}$ (resp. $\mathcal{T}_2={\scriptsize\young(12)}$) be the only standard tableau of $\lambda_1$ (resp. $\lambda_2$).

Consider $E_{\mathcal{T}_1}^{\lambda_1}$. Obviously, $\mathcal{U}={\scriptsize\young(1)}$ is
 the tableau of shape $\mu=(1)$ obtained form $\mathcal{T}_1$ by removing the cell occupied by $2$,  and $\rho_1=v^{-2}$, $\sigma=v^2$. Then
\[
\begin{array}{ccl}
\medskip
  E_{\mathcal{T}_1}^{\lambda_1} & = & \frac{y_2-\rho_1}{\sigma-\rho_1} \\
\medskip
                                & = & \frac{t^{-2}T_1^2-v^{-2}}{v^2-v^{-2}} \\
\medskip
                                & = & -\frac{vt^{-1}}{1+v^2}T_1+\frac{1}{1+v^2}.
\end{array}
\]

Similarly, we can calculate $E_{\mathcal{T}_2}^{\lambda_2}=\frac{vt^{-1}}{1+v^2}T_1+\frac{v^2}{1+v^2}$.
As we see, the set of $E_{\mathcal{T}_1}^{\lambda_1}$ and $E_{\mathcal{T}_2}^{\lambda_2}$
is a complete set of primitive orthogonal idempotents of $H_2(v,t)$.
\end{ex}

\section{Fusion formulas for $E_\mathcal{T}^{\lambda}$ of $H_k(v,t)$}\label{sec fusion}

 We now apply the fusion formulas \cite{imo}  for the primitive orthogonal idempotents of two-parameter Hecke algebra $H_k(v,t)$.

Let $\lambda=(\lambda_1,\cdots,\lambda_l)$ be a partition of $k$, $\lambda'=(\lambda_1',\cdots,\lambda_{l'}')$ be the conjugate partition of
$\lambda$ obtained by turning the rows into columns.
If a cell $\alpha$ occurs in the $(i,j)$-th position of $\lambda$, denoted by $\alpha=(i,j)$, then the corresponding
hook is defined as $h_{\alpha}=\lambda_i+\lambda_j'-i-j+1$. Set
\begin{equation}\label{}
  f(\lambda)=v^{-b(\lambda)}t^{\frac{k(k-1)}{2}}(1-v^{-2})^k\prod_{\alpha\in \lambda}
(1-v^{-2h_\alpha})^{-1},
\end{equation}
where $b(\lambda)=\sum\limits_{i\geq 1}\lambda_i(\lambda_i-1)$, the sum is carried out all cells $\alpha$ of $\lambda$.

Now we introduce the rational function $\Psi(u_1, \cdots, u_k)$ in complex variables $u_1,\cdots, u_k$
with values in $H_k(v,t)$ by the following way:
\begin{equation}\label{}
  \Psi(u_1, \cdots, u_k)=\prod_{i=1,\cdots, k-1}^{\longrightarrow}
(T_i(u_1,u_{i+1})T_{i-1}(u_2,u_{i+1})\cdots T_1(u_i,u_{i+1}))\cdot
T_{w_k}^{-1},
\end{equation}
where the product is carried out in the order of $i=1, \cdots, k-1$.

\begin{prop}
For the partition $\lambda$ of $k$ and a standard tableau $\mathcal{T}$ of shape $\lambda$,
the primitive orthogonal idempotents $E^{\lambda}_\mathcal{T}$ can be obtained by the consecutive evaluations
\begin{equation}
E^{\lambda}_\mathcal{T}=f(\lambda)\Psi(u_1,...u_k)
|_{u_1=\sigma_1}|_{u_2=\sigma_2}\cdots|_{u_k=\sigma_k}.
\end{equation}
\end{prop}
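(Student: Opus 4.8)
The plan is to proceed by induction on $k$, in the spirit of the fusion method of \cite{imo,nazar}, and to show that the consecutive evaluation of $f(\lambda)\Psi$ reproduces, step by step, the inductive definition \eqref{E} of $E^{\lambda}_{\mathcal{T}}$. The base case $k=1$ is immediate: the product defining $\Psi$ is empty, $T_{w_1}=1$, and one checks directly from the hook-content expression that $f((1))=1$, so both sides equal $1$. For the inductive step I fix the cell $\alpha$ occupied by $k$ in $\mathcal{T}$, let $\mu$ and $\mathcal{U}$ be the shape and standard tableau obtained by deleting $\alpha$, and assume the statement for $H_{k-1}(v,t)$; thus after the first $k-1$ specializations we have $f(\mu)\,\Psi(u_1,\dots,u_{k-1})\big|_{u_1=\sigma_1}\cdots\big|_{u_{k-1}=\sigma_{k-1}}=E^{\mu}_{\mathcal{U}}$.

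First I would separate the last variable $u_k$ from $\Psi$. Using the factorization of $T_{w_k}$ recorded before the statement, together with the observation that the blocks indexed by $i=1,\dots,k-2$ in the ordered product for $\Psi(u_1,\dots,u_k)$ coincide with those defining $\Psi(u_1,\dots,u_{k-1})$, I would derive a recursion of the form
\[
\Psi(u_1,\dots,u_k)=\Psi(u_1,\dots,u_{k-1})\,\Xi(u_1,\dots,u_k),
\]
where the transfer factor $\Xi$ is assembled from the single block $T_{k-1}(u_1,u_k)T_{k-2}(u_2,u_k)\cdots T_1(u_{k-1},u_k)$ and the leftover factor of $T_{w_k}^{-1}$. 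The commutation $T_{w_i}T_j=T_{i-j}T_{w_i}$ and the braid relations satisfied by the $T_i(x,y)$ are the tools needed to bring $\Xi$ into a normal form in which its dependence on $u_k$ is explicit.

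The heart of the argument is to evaluate $E^{\mu}_{\mathcal{U}}\,\Xi(\sigma_1,\dots,\sigma_{k-1},u_k)$ and then set $u_k=\sigma_k$. Here the key structural facts are that the Jucys--Murphy element $y_k$ commutes with $E^{\mu}_{\mathcal{U}}$, since $y_k$ commutes with $T_1,\dots,T_{k-2}$ and hence with all of $H_{k-1}(v,t)$, and that $y_k$ acts on the relevant module through the contents of the addable cells of $\mu$, the eigenvalue on the summand corresponding to placing $k$ in an addable cell of content $\rho$ being exactly $\rho$. I would show, using the recursion \eqref{yi} for $y_k$ and the definition \eqref{Txy} of the Baxterized generators, that after left multiplication by $E^{\mu}_{\mathcal{U}}$ the transfer factor $\Xi$ becomes a rational function of $y_k$ alone, namely a scalar multiple of $\prod_{j}(y_k-\rho_j)$ divided by the linear factors $(u_k-\sigma_i)$ coming from the poles in \eqref{Txy}. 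Those poles at $u_k=\sigma_i$ are precisely cancelled -- this is where the inversion relation $T_i(x,y)T_i(y,x)=\frac{(x-v^{-2}y)(x-v^2y)}{(x-y)^2}$ enters -- so the limit $u_k\to\sigma_k$ is regular and yields the projection $\prod_{j}\frac{y_k-\rho_j}{\sigma-\rho_j}$ appearing in \eqref{E}.

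Finally I would account for the scalar normalization: combining the inductive hypothesis with the recursion gives $E^{\mu}_{\mathcal{U}}$ times a constant multiple of $\prod_{j}\frac{y_k-\rho_j}{\sigma-\rho_j}$, and it remains to check that the ratio $f(\lambda)/f(\mu)$ exactly cancels the residual scalar produced by $\Xi$ at $u_k=\sigma_k$, leaving the projection with coefficient $1$. This is a direct computation with the hook-content formula defining $f(\lambda)$, comparing the hooks of $\lambda$ and $\mu$ across the added cell $\alpha$ together with the exponent $b(\lambda)-b(\mu)$. The main obstacle is the crux described in the previous paragraph: proving both that the consecutive evaluation is regular at the diagonal coincidences $\sigma_i=\sigma_j$ and that the regularized transfer factor collapses \emph{exactly} to the Jucys--Murphy projection. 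I expect to handle this by passing to the basis of $H_k(v,t)$ indexed by standard tableaux, on which the $y_i$ act diagonally with content eigenvalues, thereby reducing the operator identity to the scalar identities already implicit in \cite{imo}; the bookkeeping of poles via the inversion relation is the technically delicate point.
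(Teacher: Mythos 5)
Your plan is sound, and it is in substance the fusion-procedure proof of Isaev--Molev--Os'kin; note that the paper itself gives \emph{no} argument for this proposition---it states it and defers entirely to \cite{imo}---so your induction (splitting off the last block of $\Psi$, collapsing the transfer factor against $E^{\mu}_{\mathcal{U}}$ to the Jucys--Murphy projection $\prod_j(y_k-\rho_j)/(\sigma-\rho_j)$, checking regularity of the consecutive evaluations via the inversion relation, and fixing the scalar by the hook formula) reconstructs precisely the chain of lemmas in that reference. What you miss is a shortcut that makes the whole induction unnecessary, and which is presumably why the paper offers no proof: setting $\tilde T_i = t^{-1}T_i$ turns the quadratic relation $(T_i - v^{-1}t)(T_i + vt)=0$ into the one-parameter relation $(\tilde T_i - v^{-1})(\tilde T_i + v)=0$ while leaving the braid relations intact, so $H_k(v,t)\cong H_k(v)\otimes_{\Q(v)}\Q(v,t)$. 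Under this isomorphism $y_{i+1}=t^{-2}T_i y_i T_i=\tilde T_i y_i \tilde T_i$ are the standard Jucys--Murphy elements, $T_i(x,y)=\tilde T_i+\frac{(v^{-1}-v)x}{y-x}$ is the standard Baxterized generator, and $E^{\lambda}_{\mathcal{T}}$, being built from the $y_i$ and the $t$-free contents $v^{-2(n-m)}$, is the standard one-parameter idempotent; finally $T_{w_k}^{-1}=t^{-k(k-1)/2}\tilde T_{w_k}^{-1}$, and the factor $t^{k(k-1)/2}$ in $f(\lambda)$ cancels this power exactly, so $f(\lambda)\Psi(u_1,\dots,u_k)$ is \emph{verbatim} the one-parameter fusion expression of \cite{imo} and the proposition follows with no new analysis (this bookkeeping also explains your base case $f((1))=1$). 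If you do insist on carrying out the intrinsic induction, two cautions: your transfer factor is really $\Xi = T_{w_{k-1}}\bigl(T_{k-1}(u_1,u_k)\cdots T_1(u_{k-1},u_k)\bigr)T_{w_k}^{-1}$, and the commutation $T_{w_i}T_j=T_{i-j}T_{w_i}$ only moves $T_j$ with $j<i$ past $T_{w_i}$, so the $T_{k-1}$-factor needs separate handling; and the two cruxes you flag---regularity at the coincidences $\sigma_i=\sigma_j$ and the collapse of $E^{\mu}_{\mathcal{U}}\,\Xi$ to a rational function of $y_k$ alone---are each standalone lemmas in \cite{imo}, so as written your proposal is a correct program rather than a complete proof, whereas the rescaling argument closes it immediately.
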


\begin{ex}\label{ex2}
As example \ref{ex1}, we take $k=2$, $\lambda_2=(2)$. Then
\[
\begin{array}{ccl}
\medskip
  f(\lambda_2) & = & v^{-2}t(1-v^{-2})^2(1-v^{-2})^{-1}(1-v^{-4})^{-1}\\
\medskip
  & = &  \frac{t}{1+v^2},
\end{array}
\]
where $b(\lambda_2)=2$, $h_{(1,1)}=2$,  $h_{(1,2)}=1$.
Since $\sigma_1=1$, $\sigma_2=v^{-2}$, so
\[
\begin{array}{ccl}
\medskip
\Psi(u_1,u_2)|_{u_1=\sigma_1}|_{u_2=\sigma_2}&=&(t^{-1}T_1+\frac{v^{-1}-v}{v^{-2}-1})T_1^{-1}\\
\medskip
 &=& vt^{-2}T_1+v^2t^{-1}.
\end{array}
\]
Thus, the idempotent
\[
\begin{array}{ccl}
\medskip
E^{\lambda_2}_{\mathcal{T}_2}&=&f(\lambda_2)\Psi(u_1,u_2)|_{u_1=\sigma_1}|_{u_2=\sigma_2}\\
\medskip
&=& \frac{vt^{-1}}{1+v^2}T_1+\frac{v^2}{1+v^2}.
\end{array}
\]
The result coincide with example \ref{ex1}.
\end{ex}

Since $E_\mathcal{T}^{\lambda}$ is a primitive idempotent of $H_k(v,t)$,
$E_\mathcal{T}^{\lambda}$ acts on the simple module $V^{\lambda}$ of
$H_k(v,t)$ as a projector on a 1-dimensional subspace
and when $\lambda\neq \lambda'$, $E_\mathcal{T}^{\lambda}$ annihilates the irreducible
$H_k(v,t)$-module $V^{\lambda'}$. Furthermore, using Corollary \ref{decom},
we can get the following explicit description of the irreducible
modules of $U_{v, t}(sl_n)$.

\begin{thm}
For a partition $\lambda=(\lambda_1,...,\lambda_l)$ of $k$ with length $l\leq n$ and $\mathcal{T}$ a standard tableau of type $\lambda$,
then
$$V(\lambda)=E^{\lambda}_\mathcal{T}(V^{\otimes k})$$
is the finite dimensional irreducible representation of $U_{v, t}(sl_n)$.
\end{thm}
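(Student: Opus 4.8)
The plan is to read the image $E^{\lambda}_{\mathcal{T}}(V^{\otimes k})$ straight off the double-centralizer decomposition of Corollary \ref{decom} and to identify it with the irreducible $U_{v,t}(sl_n)$-module $V_\lambda$ attached to $\lambda$. Throughout I work under the genericity hypothesis that $v^2$ is not a root of unity, inherited from Corollary \ref{decom}; this is what makes $H_k(v,t)$ semisimple and legitimizes both that decomposition and the structural facts about primitive idempotents used below.

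First I would recall the decomposition
\[
V^{\otimes k} \cong \bigoplus_{\mu:\, l(\mu)\leq n} V_\mu \otimes V^\mu
\]
as a $U_{v,t}(sl_n)\otimes H_k(v,t)$-module, in which the $V_\mu$ are pairwise non-isomorphic irreducible $U_{v,t}(sl_n)$-modules and the $V^\mu$ are the corresponding simple $H_k(v,t)$-modules. Under this isomorphism the Hecke generators $\delta_n(T_i)=R_i$, and hence every $h\in H_k(v,t)$, act only on the second tensor factors, namely as $\bigoplus_\mu \mathrm{id}_{V_\mu}\otimes \rho^\mu(h)$, where $\rho^\mu$ denotes the representation of $H_k(v,t)$ on $V^\mu$. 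This is precisely the assertion that $\delta_n(H_k(v,t))$ and the image of $U_{v,t}(sl_n)$ are mutual centralizers on $V^{\otimes k}$, which is part (1) of the Schur-Weyl theorem.

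Next I would invoke the two properties of the primitive idempotent $E^{\lambda}_{\mathcal{T}}$ recorded just before the theorem: it annihilates $V^{\mu}$ whenever $\mu\neq\lambda$, and it acts on the simple module $V^\lambda$ as a projector onto a one-dimensional subspace $\ell\subset V^\lambda$. Feeding these facts into the decomposition termwise gives
\[
E^{\lambda}_{\mathcal{T}}(V^{\otimes k}) \;=\; \bigoplus_{\mu}\,\mathrm{id}_{V_\mu}\otimes\rho^\mu(E^{\lambda}_{\mathcal{T}})(V^\mu) \;=\; V_\lambda\otimes \ell \;\cong\; V_\lambda,
\]
since all summands with $\mu\neq\lambda$ vanish and the $\lambda$-summand contracts to $V_\lambda\otimes\ell$ with $\dim\ell=1$. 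As $V^{\otimes k}$ is finite-dimensional, so is its subspace $V(\lambda)$, and the final isomorphism is one of $U_{v,t}(sl_n)$-modules because $E^{\lambda}_{\mathcal{T}}\in H_k(v,t)$ commutes with the $U_{v,t}(sl_n)$-action. Hence $V(\lambda)\cong V_\lambda$ is the finite-dimensional irreducible $U_{v,t}(sl_n)$-module attached to $\lambda$.

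The explicit calculations here are negligible; the only point requiring genuine care is the justification that $E^{\lambda}_{\mathcal{T}}$ truly acts as a rank-one projector on $V^\lambda$ and by zero elsewhere. This is the standard structure theory of primitive idempotents in a semisimple algebra—$E^{\lambda}_{\mathcal{T}}$ generates a minimal left ideal isomorphic to $V^\lambda$ and lies in the Wedderburn block indexed by $\lambda$—but it rests on semisimplicity of $H_k(v,t)$. Thus the main obstacle is simply to keep the genericity hypothesis on $v^2$ in force and to appeal to the orthogonality and completeness relations $E^{\lambda}_{\mathcal{T}}E^{\lambda'}_{\mathcal{T'}}=\delta_{\lambda,\lambda'}\delta_{\mathcal{T},\mathcal{T'}}E^{\lambda}_{\mathcal{T}}$ and $\sum_{\lambda,\mathcal{T}}E^{\lambda}_{\mathcal{T}}=1$ established in Section \ref{sec idempotent}, which certify that these idempotents are indeed primitive.
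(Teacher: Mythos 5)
Your proposal is correct and matches the paper's own reasoning: the paper derives the theorem in exactly this way, combining the Schur--Weyl decomposition of Corollary \ref{decom} with the fact (stated just before the theorem) that $E^{\lambda}_{\mathcal{T}}$ acts as a rank-one projector on $V^{\lambda}$ and annihilates $V^{\lambda'}$ for $\lambda'\neq\lambda$, so that $E^{\lambda}_{\mathcal{T}}(V^{\otimes k})\cong V_{\lambda}\otimes \ell\cong V_{\lambda}$. You have merely written out the termwise application of the idempotent more explicitly than the paper does, including the genericity hypothesis it leaves implicit.
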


\begin{ex}
There are two partitions $\lambda_1=(1,1)$ and $\lambda_2=(2)$ for $k=2$, and both of their length  are not bigger than $n\geq 2$. Combine with the results in example \ref{ex1}, a computation shows that
\begin{eqnarray*}
% \nonumber to remove numbering (before each equation)
   E^{\lambda_1}_{\mathcal{T}_1}(V^{\otimes 2}) &=& span_{\Q(v,t)}\{ v_i\otimes v_j-vtv_j\otimes v_i \mid 1\leq i < j\leq n \}, \\
  E^{\lambda_2}_{\mathcal{T}_2}(V^{\otimes 2}) &=& span_{\Q(v,t)}\{v_i\otimes v_i\mid 1\leq i \leq n\}\cup\{ v_i\otimes v_j+v^{-1}tv_j\otimes v_i \mid 1\leq i < j\leq n \},
\end{eqnarray*}
they  are precisely the irreducible $U_{v, t}(sl_n)$-submodules of $V\otimes V$.
\end{ex}

\noindent{Acknowledgements: This work is supported by NSFC 11571119 and NSFC 11475178.}


\begin{thebibliography}{99999}\frenchspacing
\bibitem{AST} Artin M., Schelter W. and Tate J.. \emph{Quantum
  deformations of $GL_{n}$},  Comm. Pure Appl. Math. {\bf 44} (1991), 879--895.


\bibitem{benkart2001rep}
Benkart G. and Witherspoon S.. \emph{Representations of two-parameter quantum
  groups and Schur-Weyl duality}, Hopf algebras, Lecture Notes in Pure and
  Appl. Math., vol. 237, Dekker, New York, 2004, pp.~65--92.

\bibitem{BW} Benkart G. and Witherspoon S. \emph{ Two-parameter quantum
  groups and Drinfeld doubles},  Algebr. Represent. Theory {\bf 7} (2004), 261--286.

\bibitem {CM} Chin W. and Musson I. M..  \emph{Multiparameter quantum
enveloping algebras}  J. Pure Appl. Algebra. {\bf 107} (1996), 171--191.

\bibitem{dj}
Dipper R., and James G..
\emph{ Blocks and idempotents of Hecke algebras
of general linear groups},
Proc. London Math. Soc. {\bf 54} (1987), 57--82.


\bibitem{DP} Dobrev V. K. and Parashar P.. \emph{Duality for multiparametric
quantum GL(n)}.  J. Phys. A: Math. Gen.  {\bf 26} (1993), 6991--7002.

\bibitem {DT} Doi Y. and Takeuchi M..  \emph{Multiplication alteration by
two-cocycles-the quantum version}.  Comm. Algebra {\bf 22} (1994), 5715--5732.

\bibitem{fl} Fan Z. ,  Li Y..  \emph{ Two-Parameter quantum algebras, canonical bases, and categorifications}, International Mathematics Research Notices. Vol. 2015, No. 16, pp. 7016¨C7062

\bibitem{hu2012notes}
Hu N. and Pei Y.. \emph{{Notes on Two-Parameter Quantum Groups,(II)}},
  Communications in Algebra \textbf{40} (2012), no.~9, 3202--3220.

\bibitem{Humultipara}
Hu N., Pei Y. and Rosso M.. \emph{Multi-parameter quantum groups and quantum
  shuffles. {I}}, Quantum affine algebras, extended affine {L}ie algebras, and
  their applications, Contemp. Math., vol. 506, Amer. Math. Soc., Providence,
  RI, 2010, pp.~145--171.p

\bibitem{imo} Isaev A., Molev A., Os'kin A. \emph{ On the idempotents of Hecke algebras}, Lett. Math. Phys. 85 (2008), 79--90.

\bibitem{JingLiu} Jing N., Liu M.. \emph{Fusion procedure for the two-parameter quantum algebra $U_{r,s}(sl_n)$}, arXiv:1402.3665

\bibitem{Jb2} Jimbo M.. \emph{ A q-analogue of $U(gl_n(N+1))$, Hecke algebra, and the Yang-Baxter equation}, Lett. Math. Phys. 11 (1986), 247--252.

\bibitem{Jos} Joseph A.. \emph{ Quantum groups and their primitive ideals},  Ergebnisse der Mathematik und ihrer Grenzgebiete, Springer-Verlag, Berlin,(1995)

\bibitem{Lusztigbook}
Lusztig G.. \emph{Introduction to quantum groups}, Modern Birkh\"auser
  Classics, Birkh\"auser/Springer, New York, 2010.

\bibitem{nazar}
 Nazarov M.. \emph{ A mixed hook-length formula for affine Hecke algebras},
European J. Combin. {\bf 25} (2004), 1345--1376.


\bibitem{Takeuchi}
Takeuchi M.. \emph{A two-parameter quantization of {$GL(n)$}}, Proc.
  Japan Acad. Ser. \textbf{66} (1990), no.~5, 112--114.

\bibitem{Weyl} Weyl H.. \emph{ The classical groups; their invariants and representations},
Princeton Univ. Press, Princeton, 1946.


\end{thebibliography}
\end{document}